\newtheorem{theorem}{Theorem}[section]
\newtheorem{corollary}[theorem]{Corollary}
\newtheorem{lemma}[theorem]{Lemma}
\newtheorem{proposition}[theorem]{Proposition}
\theoremstyle{definition}
\newtheorem{example}[theorem]{Example}
\newtheorem{remark}[theorem]{Remark}
\newcommand{\ds}{\displaystyle}
\newcommand{\ep}{\varepsilon}
\newcommand{\ind}[1]{\ensuremath{\mbox{\boldmath{$1$}}_{#1}}}
\newcommand{\ip}[2]{\ensuremath{\left\langle{#1}\,|\,{#2}\right\rangle}}
\newcommand{\lint}[4]{\ensuremath{\int_{#1}^{#2}{#3}\:\mathrm{d}{#4}}}
\newcommand{\map}[3]{\ensuremath{{#1}:{#2}\to{#3}}}
\newcommand{\md}{\ensuremath{\mathrm{d}}}
\newcommand{\me}{\ensuremath{\mathrm{e}}}
\newcommand{\M}{\mathcal{M}}
\newcommand{\N}{\mathbb{N}}
\newcommand{\n}[1]{\ensuremath{\left\|{#1}\right\|}}
\newcommand{\ndot}{\ensuremath{\left\|\cdot\right\|}}
\newcommand{\pn}[2]{\ensuremath{\left\|{#1}\right\|_{#2}}}
\newcommand{\pndot}[1]{\ensuremath{\left\|\cdot\right\|_{#1}}}
\newcommand{\R}{\mathbb{R}}
\newcommand{\restrict}[1]{\ensuremath{\!\!\upharpoonright_{#1}}}
\newcommand{\set}[2]{\ensuremath{\left\{{#1}\;:\;\,{#2}\right\}}}
\newcommand{\tn}[1]{\ensuremath{\left|\kern-.9pt\left|\kern-.9pt\left|{#1}\right|\kern-.9pt\right|\kern-.9pt\right|}}
\newcommand{\tndot}{\ensuremath{\left|\kern-.9pt\left|\kern-.9pt\left|\cdot\right|\kern-.9pt\right|\kern-.9pt\right|}}
\newcommand{\ts}{\textstyle}
\DeclareMathOperator{\essvar}{ess\,var}
\DeclareMathOperator{\Lip}{Lip}
\DeclareMathOperator{\midd}{mid}
\DeclareMathOperator{\sgn}{sgn}
\DeclareMathOperator{\var}{var}
\numberwithin{equation}{section}
\title[A new convergence analysis of the Camassa-Holm equation]{A new convergence analysis of the particle method for the Camassa-Holm equation}
\author[L. \'O N\'araigh]{Lennon \'O N\'araigh}
\address[L. \'O N\'araigh]{School of Mathematics and Statistics, University College Dublin, Belfield, Dublin 4, Ireland}
\email{onaraigh@maths.ucd.ie}
\author[K. E. Pang]{Khang Ee Pang}
\address[K. E. Pang]{School of Mathematics and Statistics, University College Dublin, Belfield, Dublin 4, Ireland}
\email{khang-ee.pang@ucdconnect.ie}
\author[R. J. Smith]{Richard J. Smith}
\address[R. J. Smith]{School of Mathematics and Statistics, University College Dublin, Belfield, Dublin 4, Ireland}
\email{richard.smith@maths.ucd.ie}
\date{\today}
\begin{document}

\begin{abstract}
We present a new self-contained convergence analysis of the particle method that can be applied to a range of PDEs, including the Camassa-Holm equation. It is a development of the analysis of Chertock, Liu and Pendleton, which used compactness properties of spaces of functions having bounded variation. In our analysis we establish solutions by applying a metric Arzel\`a-Ascoli compactness result to a space of measure-valued functions equipped with the bounded Lipschitz metric. All the convergence and regularity results of the previous analysis follow as a consequence and are computationally easier to establish.
\end{abstract}

\keywords{Camassa-Holm equation, particle method, weak solutions, bounded variation, metric Arzel\`a-Ascoli Theorem}
\subjclass{Primary 35D30, 46N20; Secondary 26A45}
\maketitle

\section{Introduction and preliminaries}\label{sec:intro}

Fix a length scale $\alpha>0$. The \textit{Camassa-Holm Equation} is given by
\begin{equation}\label{eqn_CH}
 m_t + (um)_x + u_xm = 0 \qquad\text{where}\qquad m = (1-\alpha^2\partial_{xx})u \tag*{(CH)}
\end{equation}
in the region $\Omega := \R^+ \times \R$ and subject to the initial condition
\begin{equation}\label{eqn_init_condition}
u(0,\cdot) = \map{u_0}{\R}{\R}, \qquad m_0 = \big(1-\alpha^2{\ts\frac{\md^2}{\md x^2}}\big)u_0.
\end{equation}

The CH equation was introduced by Camassa and Holm~\cite{cammasa1993integrable} as a model for the unidirectional motion of waves at the free surface under the influence of gravity.  The physical relevance of the CH equation is outlined in further detail in References~\cites{johnson2002camassa,dullin2003camassa}.  
Camassa and Holm found travelling-wave solutions of the CH equation of the form 
\begin{equation}
u(x,t)=c\,\me^{-|x-ct|/\alpha},
\label{eq:single}
\end{equation}
where $c$ is a constant wave speed.  Linear combinations of such travelling waves are also solutions of the CH equation, in this case the wave speed of each component wave depends on time~\cite{cammasa1993integrable}.  Such solutions are solitons: the individual components emerge from interactions with their original shapes and speeds intact~\cites{cammasa1993integrable,camassa1994new}.  As the partial derivative of $u$ with respect to $x$ in Equation~\eqref{eq:single} does not exist at $x=0$,  such soliton solutions have to be understood as weak solutions of the CH equation. These are referred to in the literature as \textit{peakons}, or \textit{particle solutions}.    

The weak solutions of the CH equation have been analyzed rigorously in Reference~\cite{cm:00}.  There, Constantin and Molinet introduce a weak formulation of the CH equation and prove the existence of global weak solutions for the case when $u_0\in H^1(\R)$ and $m_0\in \M^+(\R)$ (the space of positive Radon measures).  The present work is also concerned with weak solutions, the aim here is to construct global weak solutions of the CH equation by applying a metric Arzel\`a-Ascoli compactness result to a space of measure-valued functions. 
Before doing this, in the remainder of the introduction we formulate the weak form of the CH equation to be used in this article, as well as describing in detail the properties of the peakons.  We also place our work in the context of the existing literature on the subject, and introduce the necessary notation.

\subsection*{Weak solution of the Camassa--Holm Equation}

We say that $u$ is a weak solution of \ref{eqn_CH}, subject to \eqref{eqn_init_condition}, if $u(0)=u_0$ and for all test functions $\phi \in C^\infty_c(\Omega)$ we have
\begin{align}
 \lint{-\infty}{\infty}{\phi(0,x)}{m_0(x)} &+ \lint{0}{\infty}{\lint{-\infty}{\infty}{(\phi_t - \alpha^2 \phi_{txx})u}{x}}{t} \nonumber\\
 &+ \lint{0}{\infty}{\lint{-\infty}{\infty}{\ts(\frac{3}{2}\phi_x - \frac{1}{2}\alpha^2 \phi_{xxx})u^2}{x}}{t} \nonumber\\
 &+ \lint{0}{\infty}{\lint{-\infty}{\infty}{\ts\frac{1}{2}\alpha^2 \phi_x u^2_x}{x}}{t} = 0. \label{eqn_weak_CH}
\end{align}

\subsection*{Peakons}

Consider an initial condition for the CH equation
\begin{equation}
u_0(x)=\sum_{i=1}^N p_i^0 G(x-x_i^0),\qquad m_0(x)=\sum_{i=1}^N p_i^0 \delta(x-x_i^0),
\end{equation}
where the  positions $x_i^0$ and weights $p_i^0$ and are constants, and where
\begin{equation}
G(x)=\frac{1}{2\alpha}\me^{-|x|/\alpha},\qquad x\in\R
\label{eq:GHelm}
\end{equation}
is the Green's function for the modified Helmholtz problem $(1-\alpha^2\partial_{xx})G(x)=\delta(x)$; this is precisely the modified Helmholtz problem that appears in \ref{eqn_CH}.  Then, a trial solution of the CH equation is:
\begin{equation}
u^{(N)}(t,x)=\sum_{i=1}^N p_i(t) G(x-x_i(t)),\qquad m^{(N)}(t,x)=\sum_{i=1}^N p_i(t) \delta(x-x_i(t)),\qquad t\geqslant 0,
\label{eq:utrial}
\end{equation}
where the positions $\{x_i(t)\}_{i=1}^N$ and  weights $\{p_i(t)\}_{i=1}^N$ are functions of time to be determined.  Evidently, we require:
\[
x_i(0)=x_i^0,\qquad p_i(0)=p_i^0,\qquad i\in\{1,2,\ldots,N\}.
\] 
Furthermore, we substitute 
Equation~\eqref{eq:utrial} into the weak formulation~\eqref{eqn_weak_CH}.  There is then a well-established procedure~\cite{holm2008geometric} that yields a set of ordinary differential equations (ODEs) for the positions and weights, valid for $t>0$:
\begin{equation}\label{eqn_ODEs}
\begin{cases}
\dot{x}_j(t) &= u^{(N)}(t,x_j(t)) = {\ds \frac{1}{2\alpha}\sum_{i=1}^N p_i(t)\me^{-|x_j(t)-x_i(t)|/\alpha}}\\
\dot{p}_j(t) &= {\ds \frac{1}{2\alpha^2}p_j(t)\sum_{i\neq j} p_i(t) \sgn(x_j(t)-x_i(t))\me^{-|x_j(t)-x_i(t)|/\alpha}.}
\end{cases}
\end{equation}
Observe that the expression for $\dot{p}_j(t)$ in \eqref{eqn_ODEs} can be written alternatively as
\begin{equation}\label{eqn_p_j_alternative}
 \dot{p}_j(t) = \ts{\frac{1}{2}p_j(t)\big[u^{(N)}_x(t,x_j(t)_-)+u^{(N)}_x(t,x_j(t)_+)\big].}
\end{equation}
This follows because $G'(0_-)+G'(0_+)=0$ (note that we omit the index $j$ in the sum defining $\dot{p}_j(t)$).

Equation~\eqref{eqn_ODEs} are the evolution equations which govern the positions $\{x_i(t)\}_{i=1}^N$ and the weights $\{p_i(t)\}_{i=1}^N$.  The pairs $(x_i(t),p_i(t))$ are referred to as peakons; also, as Equation~\eqref{eqn_ODEs} is in fact a Hamiltonian system~\cite{cammasa1993integrable}, the peakons can also be identified with notional particles, with $x_i(t)$ playing the role of a particle position and $p_i(t)$ playing the role of a particle momentum.

The properties of Equation~\eqref{eqn_ODEs} have implications for the weak global solutions of the Camassa--Holm equation, and for that reason, these properties are reviewed here.  The equations can be written in Hamiltonian form~\cite{cammasa1993integrable}, with Hamiltonian
\begin{equation}
H=\tfrac{1}{2}\sum_{i=1}^N\sum_{j=1}^N p_ip_j\left(\tfrac{1}{2\alpha}\me^{-|x_i-x_j|/\alpha}\right),
\end{equation}
In this context, we make the identification $\mathrm{sgn}(0)=0$; then, Equation~\eqref{eqn_ODEs} can be re-written as
\begin{equation}\label{eqn_Hamiltonian}
\begin{cases}
 \dot{x}_j(t) &= \frac{\partial H}{\partial p_j}\\
 \dot{p}_j(t) &= -\frac{\partial H}{\partial x_j}.
\end{cases}
\end{equation}
Consequently, the finite-dimensional set of equations~\eqref{eqn_Hamiltonian} is completely integrable with an infinite set of constants of motion $\{H_n\}_{n=1}^\infty$ generated by taking the trace of successive powers of the
matrix $L$,
\[
H_n=\mathrm{Tr}(L^n),\qquad n\geqslant 1,
\]
where the $ij^{\text{th}}$ component of $L$ is given by
\[
L_{ij} = p_j\left(\tfrac{1}{2\alpha}\me^{-|x_i-x_j |/2\alpha}\right).
\]
The conserved quantity $H_1$ is proportional to the total momentum of the system, denoted by $P$:
\begin{equation}\label{eqn_conserved_momentum}
H_1=\frac{1}{2\alpha} \sum_{i=1}^N p_i=\frac{1}{2\alpha}P.
\end{equation}
Similarly $H_2$ is proportional to the Hamiltonian, with $H_2=H/\alpha$.

Camassa and co-authors~\cite{camassa2006integral} have analyzed the regularity properties of the ODE system~\eqref{eqn_Hamiltonian}.  The right-hand side of the system can be identified with a vector field in $\R^{2N}$. The vector field is Lipschitz continuous in the domain 
\[
D := \set{x \in \R^N}{x_1 < \cdots < x_N} \times \R^N,
\]
and by the Picard-Lindel\"of Theorem, the initial value problem with 
\[
\left(x_1^0,\ldots,x_N^0,p_1^0,\ldots,p_N^0\right)\in D
\]
has a unique local solution that is $C^1$-continuous with respect to $t$. The global solution is guaranteed if the initial value is in the domain
\[
\tilde{D} := \set{x \in \R^N}{x_1 < \cdots < x_N} \times \set{p\in\R^N}{p_i > 0},
\]
and the solution of Equation~\eqref{eqn_Hamiltonian} obeys the bounds $x_i^0 \leqslant  x_i(t) \leqslant H_1 t + x_i^0$, and $p_i^0\me^{-H_1t/\alpha} \leqslant p_i(t) \leqslant p_i^0\me^{H_1t/\alpha}$ for all $t\geqslant0$.

Furthermore, the solution is contained in $\tilde{D}$ for all $t \geqslant 0$. This proves that no particle collision can occur in finite time, that is, $x_i(t) \neq  x_j(t)$ for all $i\neq j$ and $t \geqslant 0$.

In this work, we focus on the case where the initial momenta $\{p_i^0\}_{i=1}^N$ are all non-negative, this corresponds to right-travelling particles.  It also corresponds to an initial condition $m_0\in\M^+(\R)$.    This guarantees that the solution to the ordinary differential equations satisfied by the peakons (Equation~\eqref{eqn_ODEs}) is globally defined.  This regularity forms a key part of our results concerning the existence of weak solutions of the CH equation.  We comment briefly also on the case where the initial momenta are not all non-negative.  For the two-particle case, this gives rise to a `head-on' collision, and a finite-time singularity in the corresponding system of ODEs~\cite{cammasa1993integrable}.  A similar conclusion applies to the $N$-particle case: collisions occur pairwise between neighbouring right-travelling and left-travelling particles~\cite{beals2001peakon} -- so-called peakons and anti-peakons.  The solution of the ODE system can be restarted after the collision, using either a dissipation rule or a conservation-of-energy rule~\cite{bressan2007global}.

\subsection*{The work in the context of the existing literature} The authors of \cites{clp:12,clp:12a} introduce a convergence analysis to the particle method, based on functions having bounded variation, to demonstrate the existence of weak solutions of a class of PDEs including \ref{eqn_CH}. The functions arising from the particle method are shown to have uniformly pointwise bounded variation in the spatial direction $x$ and satisfy a Lipschitz property in time $t$. They use a compactness theorem \cite{bressan:00}*{Theorem 2.4}, which applies to functions having these properties, to extract a convergent subsequence of particle solutions of \eqref{eqn_init_condition}, whose limit $u$ satisfies \eqref{eqn_weak_CH} subject to \eqref{eqn_init_condition}. In addition (using a natural alternative way to express $u$ -- see below), they establish a regularity property of the solution by showing that $u$ belongs to $C_b(\R^+,H^1(\R))$, that is, the space of bounded continuous functions on $\R^+$ taking values in the Hilbert-Sobolev space $H^1(\R)$.

The present authors observed that the existence of weak solutions having the same regularity properties can be achieved in a more straightforward manner by applying a metric Arzel\`a-Ascoli compactness result to a space of measure-valued functions equipped with the bounded Lipschitz metric. This metric is of Kantorovich–Rubinstein-Wasserstein type; the use of such metrics in connection with evolutionary PDEs and limits of many-particle systems dates back to e.g. \cites{braunkepp1977,dobrushin1979} and has been developed significantly since then (see e.g. \cite{carrillo2020lipschitz} and references therein). In particular, the Camassa-Holm equation is no stranger to analysis via `Lipschitz metrics'. Other authors have already looked at constructing a metric to measure the distance between different weak solutions, either in characteristic space~\cite{grunert2011lipschitz} or using a transformed version of the Camassa--Holm equation~\cite{carrillo2020lipschitz}. In the authors' opinion, the advantages of the approach presented herein, versus the one given in \cites{clp:12,clp:12a}, are that the computations of the estimates needed to secure the existence and desired regularity of solutions are much shorter, and that it can be generalized with minimal computational effort to much more involved problems that also involve particle solutions, for instance, the fourth-order Geometric Thin-Film Equation~\cites{holm2020gdim,ops2023:gtfe}.

\subsection*{Plan of the Paper and notation}

We spend the rest of this introduction outlining the plan of the paper and the main notation. In \Cref{sect_Green} we introduce our family of Green's functions and their associated integral operators. In \Cref{sect_lsc} we show that the particle solutions behave well with respect to the bounded Lipschitz metric on $B_{\M^+(\R)}$ (the set of positive Radon measures having total variation at most $1$). In \Cref{sect_regular} we build the tools needed to show that our solutions satisfy the desired regularity. Finally, in \Cref{sect_exist}, we put the pieces together and show that our techniques can be applied to solve \ref{eqn_CH}. In terms of results, the general approach is summarised and highlighted in \Cref{prop_T_OK,prop_reg_solutions,prop_convergence_L1loc}, and \Cref{cor_test_convergence}. We conclude the paper by proving \Cref{thm_CH_solution}, which demonstrates that \ref{eqn_CH} can be solved using these techniques.

Given $t \in \R^+$, we shall denote by $\map{u(t)}{\R}{\R}$ the function $u(t,\cdot)$ and by $m(t)$ the element of the space $\M(\R)$ of signed Radon measures on $\R$, defined by
\begin{equation}
 m(t) = \big(1-\alpha^2{\ts\frac{\md^2}{\md x^2}}\big)u(t),
\label{eq:mt}
\end{equation}
(where the derivative is taken in the sense of distributions). In this way, we can identify $u$ and $m$ as functions from $\R^+$ into a suitable function and measure space, respectively. Accordingly, Banach space theory provides the theoretical framework for the present study, and we finish the section by presenting the relevant notation.

Given a Banach space $X$ we denote by $B_X$ its closed unit ball. We denote by $C_0(\R)$ the space of continuous real-valued functions $f$ on $\R$ such that $\lim_{|x|\to\infty}f(x)=0$, equipped with the supremum norm $\pndot{\infty}$. Its dual space identifies isometrically with $\mathcal{M(\R)}$ (equipped with total variation norm $\pndot{1}$), which via the standard duality can be expressed as
\[
\pn{\mu}{1} = \sup\set{\mu(f) := \lint{-\infty}{\infty}{f(x)}{\mu(x)}}{f \in B_{C_0(\R)}}, \qquad \mu \in \mathcal{M(\R)}.
\]
The positive part of the dual unit ball $B_{\M(\R)}$ shall be denoted $B_{\M^+(\R)}$. The Dirac evaluation measure at $x \in \R$ shall be denoted $\delta_x$ (in distributional terms this is $t \mapsto \delta(t-x)$). We denote by $|\mu|$ the total variation measure of $\mu \in \M(\R)$. The weak$^*$-topology on $\M(\R)$, denoted $w^*$, is the weakest topology with respect to which all linear maps of the form $\mu \mapsto \mu(f)$, $f \in C_0(\R)$, are continuous. Since we will be working often with measures other than Lebesgue measure on $\R$, for clarity we emphasise that the terms `almost everywhere' (a.e.) and `almost all' (a.a.) will be taken with respect to Lebesgue measure.

The space of compactly supported functions having derivatives of all orders on a subset $\Omega$ of Euclidean space is denoted $C^\infty_c(\Omega)$. We denote by $\pndot{p}$ the usual norm on $L^p(\R)$, $p=1,2$, and $\ip{\cdot}{\cdot}_2$ shall denote the usual inner product on $L^2(\R)$. The standard norm and inner product on the Hilbert-Sobolev space $H^n(\R)$ will be denoted simply by $\ndot$ and $\ip{\cdot}{\cdot}$, respectively. Operator norms will also be denoted by $\ndot$. Since $H^n(\R)$ is a real Hilbert space we can and will identify it with its dual in the standard way. The Lipschitz constant of a Lipschitz function $g$ will be denoted $\Lip(g)$.

A function $\map{f}{\R}{\R}$ having finite pointwise bounded variation shall be called a BV function, with said variation denoted $\var(f)$. The Banach space of (equivalence classes of) integrable BV functions is denoted $BV(\R)$. This space is equipped with norm $\pn{f}{BV} := \pn{f}{1} + \essvar(f)$, where $\essvar$ denotes essential variation. We shall interpret $f \in BV(\R)$ to mean that $f$ is the (unique) left-continuous representative of its corresponding equivalence class, whose variation matches the latter's essential variation. The theory of BV functions makes permissible this slight abuse of notation. Making this assumption about $f$ means that we can write $f(x)=\mu(-\infty,x)$ for all $x \in \R$, where $\mu \in \M(\R)$ is the distributional derivative of $f$ and satisfies $\pn{\mu}{1}=\var(f)$.

\section{Green's functions and integral operators}\label{sect_Green}

We will adopt the following general scheme for specifying Green's functions of sufficient regularity for our purposes. It is a generalisation of that found on \cite{clp:12a}*{p.~3}. Fix $n \in \N$ and let $\map{G}{\R}{\R}$ be an even function, such that the classical derivative $G^{(k)}$ of order $k$ is defined everywhere, absolutely continuous and integrable on $\R$ for $0 \leqslant k < n$, and there exists $F \in BV(\R)$ that agrees with $G^{(n)}$ a.e.~on $\R$. A straightforward argument demonstrates that $G^{(n)}(x)$ must exist whenever $F$ is continuous, which is at all but at most countably many $x \in \R$. Occasionally we will refer to $\pn{G^{(k)}}{\infty}$; in the case $k=n$ we interpret this as essential supremum.

With a slight abuse of notation we can write $G \in W^{n,1}(\R)$ and $G^{(n)} \in BV(\R)$. It follows that $G^{(k)}$ is essentially bounded for all $k \leqslant n$ and thus $G \in H^n(\R)$ as well.

\begin{example}\label{ex_CH_Green}
In the particular case of \ref{eqn_CH} $G$ is the Green's function for the modified Helmholtz problem $(1-\alpha^2\partial_{xx})G(x)=\delta(x)$, hence, $G$ is given by Equation~\eqref{eq:GHelm}.  In this case, $n=1$, $G$ is absolutely continuous and integrable on $\R$, and there exists an integrable BV function that agrees with $G'$ at all non-zero points.
\end{example}

Given $\mu \in \M(\R)$, straightforward arguments show that for $k < n$, $G^{(k)} * \mu$ is absolutely continuous with derivative $G^{(k+1)} * \mu$, everywhere if $k<n-1$ and almost everywhere if $k=n-1$ (a more delicate analysis can yield a stronger conclusion in the latter case, but it is not required here). Certainly, this implies
\begin{equation}\label{eqn_conv_deriv}
(G * \mu)^{(k)} = G^{(k)} * \mu \qquad\text{a.e.~on $\R$ and all }k \leqslant n. 
\end{equation}

Given a map $\map{f}{\R}{\R}$ and $x \in \R$, define $\map{f_{(x)}}{\R}{\R}$ by $f_{(x)}(t)=f(x-t)$, $t \in \R$. We observe that if $f \in H^n(\R)$ then so is $f_{(x)}$ because
\[
(f_{(x)})^{(k)}(t) = (-1)^kf^{(k)}(x-t) = (-1)^k(f^{(k)})_{(x)}(t), \qquad \text{a.a.~}t \in \R\text{ and all }k \leqslant n, 
\]
and
\[
 \n{f_{(x)}}^2 = \sum_{k=1}^n \lint{-\infty}{\infty}{f^{(k)}(x-t)^2}{t} = \sum_{k=1}^n \lint{-\infty}{\infty}{f^{(k)}(x)^2}{x} = \n{f}^2, \quad\text{$x \in \R$.}
\]

Now define a linear map $\map{T}{H^n(\R)}{C_0(\R)}$ by 
\[
 (Tu)(x) = \sum_{k=0}^n (-1)^k(G^{(k)} * u^{(k)})(x) = \sum_{k=0}^n \lint{-\infty}{\infty}{(G_{(x)})^{(k)}(t)u^{(k)}(t)}{t} = \ip{G_{(x)}}{u}.
\]

Observe that the next result, which will be of most help in \Cref{sect_regular}, only requires the property that $G \in H^n(\R)$.

\begin{proposition}\label{prop_T_OK} We verify the following facts about $T$:
\begin{enumerate}
\item $T$ takes values in $C_0(\R)$, so is well-defined;
\item $T$ is bounded and $\n{T} = \n{G}$;
\item $T^*\mu = G * \mu$ for all $\mu \in \mathcal{M(\R)}$, where $\map{T^*}{\M(\R)}{H^n(\R)^* \equiv H^n(\R)}$ is the dual map.
\end{enumerate}
\end{proposition}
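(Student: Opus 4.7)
The plan is to base all three items on the two expressions for $T$ recorded in its definition: the Riesz-type formula $(Tu)(x) = \ip{G_{(x)}}{u}$ handles (2) cleanly, while the explicit convolution sum is the natural tool for (1) and (3).

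For (1), I would first check $G^{(k)} \in L^2(\R)$ for every $0 \leqslant k \leqslant n$. When $k<n$, absolute continuity together with integrability of both $G^{(k)}$ and $G^{(k+1)}$ forces $G^{(k)}$ to be a bounded continuous function that vanishes at infinity, so $G^{(k)} \in L^1(\R) \cap L^\infty(\R) \subset L^2(\R)$. When $k=n$, $G^{(n)}$ agrees a.e.\ with $F \in BV(\R)$, which is integrable by definition and bounded because finite total variation combined with integrability forces $F$ to vanish at $\pm\infty$; hence $F \in L^2(\R)$. Since $u \in H^n(\R)$ supplies $u^{(k)} \in L^2(\R)$, each summand $G^{(k)} * u^{(k)}$ is a convolution of two $L^2$ functions and therefore lies in $C_0(\R)$ (bounded by Cauchy-Schwarz, continuous by translation continuity, vanishing at infinity by density of $C_c$). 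A finite sum of $C_0(\R)$ functions stays in $C_0(\R)$, giving $Tu \in C_0(\R)$.

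For (2), Cauchy-Schwarz gives $|(Tu)(x)| = |\ip{G_{(x)}}{u}| \leqslant \n{G_{(x)}}\n{u} = \n{G}\n{u}$, whence $\n{T} \leqslant \n{G}$. For the reverse inequality I would exploit the evenness of $G$: then $G_{(0)} = G$ and $(TG)(0) = \ip{G}{G} = \n{G}^2$, so $\n{TG}_\infty \geqslant \n{G}^2$ and therefore $\n{T} \geqslant \n{G}$. For (3), I would compute $\mu(Tu) = \int \ip{G_{(x)}}{u}\,\md\mu(x)$ directly: expand the inner product as a sum, swap the order of integration by Fubini (justified since $|\mu|$ is finite and $\int|G^{(k)}(x-t)u^{(k)}(t)|\,\md t \leqslant \pn{G^{(k)}}{2}\pn{u^{(k)}}{2}$ uniformly in $x$), then use the parity identity $G^{(k)}(x-t) = (-1)^k G^{(k)}(t-x)$ (from $G$ even) to convert the inner $\mu$-integral into $(-1)^k(G^{(k)}*\mu)(t)$. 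The two $(-1)^k$ factors cancel, leaving $\sum_{k=0}^n \int u^{(k)}(t)(G^{(k)}*\mu)(t)\,\md t$, which by \eqref{eqn_conv_deriv} equals $\ip{u}{G*\mu}$. To close the argument I would verify $G*\mu \in H^n(\R)$ via Young's inequality for measure convolutions, $\pn{G^{(k)}*\mu}{2} \leqslant \pn{G^{(k)}}{2}\pn{\mu}{1}$ for each $k \leqslant n$, combined once more with \eqref{eqn_conv_deriv}. Under the self-duality identification $H^n(\R)\equiv H^n(\R)^*$, this yields $T^*\mu = G*\mu$.

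The main obstacle is (3): one must keep track of two independent sources of sign ambiguity — the $(-1)^k$ in the definition of $T$ and the parity of $G^{(k)}$ — and then reassemble the $L^2$ estimates for convolutions to confirm that $G*\mu$ genuinely sits in $H^n(\R)$, not merely in $L^2(\R)$. Once these pieces are aligned, the Fubini interchange and the inner-product identification via \eqref{eqn_conv_deriv} are routine.
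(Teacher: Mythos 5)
Your proposal is correct, and for parts (2) and (3) it follows essentially the same route as the paper: the same Cauchy--Schwarz bound $|(Tu)(x)|\leqslant\n{G}\n{u}$ with the choice $u=G$ for sharpness, and the same Fubini computation in which the parity identity $G^{(k)}(x-t)=(-1)^kG^{(k)}(t-x)$ cancels the signs in the definition of $T$ before \eqref{eqn_conv_deriv} is applied. Where you genuinely diverge is part (1). The paper works directly with $(Tu)(x)=\ip{G_{(x)}}{u}$: continuity follows from translation continuity of $\ndot$ on $H^n(\R)$, and decay at infinity from splitting the inner product over $[-M,M]$ and its complement and estimating each piece by Cauchy--Schwarz. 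You instead treat the summands $G^{(k)}*u^{(k)}$ individually and invoke the standard fact that the convolution of two $L^2$ functions lies in $C_0(\R)$; this is sound and arguably slicker, since it outsources the decay step to a known lemma. One small remark: your derivation of $G^{(k)}\in L^2(\R)$ via $L^1\cap L^\infty$ uses the $W^{n,1}$/BV hypotheses, whereas $G\in H^n(\R)$ alone already gives this (a point the paper cares about, as it notes just before the proposition that the result should require only $G\in H^n(\R)$); your argument for (1) would still go through verbatim under that weaker hypothesis, so nothing is lost. Your closing verification that $G*\mu\in H^n(\R)$ via $\pn{G^{(k)}*\mu}{2}\leqslant\pn{G^{(k)}}{2}\pn{\mu}{1}$ is a worthwhile addition that the paper leaves implicit, since it is what legitimises reading $\ip{G*\mu}{u}$ as an $H^n(\R)$ pairing under the self-duality identification.
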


\begin{proof}

We verify that $Tu \in C_0(\R)$. First, given $x \in \R$ we have
\begin{align*}
|(Tu)(x+h)-(Tu)(x)|&= \big|\ip{G_{(x+h)}-G_{(x)}}{u}\big|\\
&\leqslant \n{G_{(x+h)}-G_{(x)}}\n{u} \to 0 \quad\text{as $h \to 0$,}
\end{align*}
by the density of $C^\infty_c(\R)$ in $H^n(\R)$ and the translation invariance of $\ndot$. Hence $Tu$ is continuous.

Second, we show that $(Tu)(x)\to 0$ as $|x|\to\infty$. Given $\ep>0$, fix $M > 0$ large enough such that
\[
 \sum_{k=0}^n \pn{G^{(k)}\cdot\ind{\R\setminus J}}{2}^2,\; \sum_{k=0}^n \pn{u^{(k)}\cdot\ind{\R\setminus J}}{2}^2 \leqslant \ep^2,
\]
where $\ind{E}$ denotes the indicator function of $E \subseteq \R$, $J:=[-M,M]$ and $\cdot$ denotes pointwise product. Given $|x| > 2M$, it follows that
\begin{align*}
 |(Tu)(x)| &\leqslant \bigg|\sum_{k=0}^n \ip{(G_{(x)})^{(k)}\cdot\ind{J}}{u^{(k)}}_2 \bigg| + \bigg|\sum_{k=0}^n \ip{(G_{(x)})^{(k)}\cdot\ind{\R\setminus J}}{u^{(k)}}_2 \bigg|\\
 &= \bigg|\sum_{k=0}^n \ip{(G_{(x)})^{(k)}\cdot\ind{J}}{u^{(k)}}_2 \bigg| + \bigg|\sum_{k=0}^n \ip{(G_{(x)})^{(k)}}{u^{(k)}\cdot\ind{\R\setminus J}}_2 \bigg|\\
 &\leqslant \bigg(\sum_{k=0}^n \pn{(G_{(x)})^{(k)}\cdot\ind{J}}{2}^2 \bigg)^{\frac{1}{2}} \n{u} + \n{G_{(x)}} \bigg(\sum_{k=0}^n \pn{u^{(k)}\cdot\ind{\R\setminus J}}{2}^2 \bigg)^{\frac{1}{2}}\\
 &\leqslant \bigg(\sum_{k=0}^n \pn{G^{(k)}\cdot\ind{\R\setminus J}}{2}^2 \bigg)^{\frac{1}{2}} \n{u} + \n{G} \bigg(\sum_{k=0}^n \pn{u^{(k)}\cdot\ind{\R\setminus J}}{2}^2 \bigg)^{\frac{1}{2}}\\
 &\leqslant (\n{u}+\n{G})\ep.
\end{align*}
The penultimate inequality above follows because $|x|>2M$ implies
\[
 \pn{(G_{(x)})^{(k)}\cdot\ind{J}}{2}^2 = \lint{-M}{M}{G^{(k)}(x-t)^2}{t} \leqslant \lint{|t| > M}{}{G^{(k)}(t)^2}{t} = \pn{G^{(k)}\cdot\ind{\R\setminus J}}{2}^2,
\]
whenever $0 \leqslant k \leqslant n$. Therefore $Tu \in C_0(\R)$, as claimed. Second, since $|(Tu)(x)| \leqslant \n{G_{(x)}}\n{u}=\n{G}\n{u}$ for all $x$, it is clear that $\n{T}\leqslant \n{G}$, and setting $u=G$ yields equality.

Finally, using Fubini's Theorem we observe that
\begin{align*}
(T^*\mu)(u) &= \mu(Tu)\\
&= \lint{-\infty}{\infty}{(Tu)(x)}{\mu(x)} \\
&= \lint{-\infty}{\infty}{\sum_{k=0}^n\lint{-\infty}{\infty}{(-1)^kG^{(k)}(x-t)u^{(k)}(t)}{t}}{\mu(x)} \\
&= \lint{-\infty}{\infty}{\sum_{k=0}^n\bigg(\lint{-\infty}{\infty}{(-1)^kG^{(k)}(x-t)}{\mu(x)}\bigg)u^{(k)}(t)}{t}\\
&= \lint{-\infty}{\infty}{\sum_{k=0}^n\bigg(\lint{-\infty}{\infty}{G^{(k)}(t-x)}{\mu(x)}\bigg)u^{(k)}(t)}{t} \tag*{as $G$ is even}\\
&= \sum_{k=0}^n\lint{-\infty}{\infty}{(G * \mu)^{(k)}(t)u^{(k)}(t)}{t} \tag*{by \eqref{eqn_conv_deriv}}\\
&= \ip{G * \mu}{u}.
\end{align*}
This holds for all such $u$, therefore via the standard duality $T^*\mu = G * \mu$ as claimed.
\end{proof}

\section{The bounded Lipschitz metric and the particle method}\label{sect_lsc}

We will consider the (dual) bounded Lipschitz (or Dudley) norm $\ndot_{BL}$ on $\mathcal{M}(\R)$, defined by
\[
    \n{\mu}_{BL} = \sup\set{\mu(f) := \lint{-\infty}{\infty}{f(x)}{\mu(x)}}{f \in \Xi},
\]
where $\Xi :=\set{f \in C_0(\R)}{\pn{f}{\infty}+\Lip(f) \leq 1}$, together with its associated metric $d$ given by $d(\mu,\nu)=\n{\mu-\nu}_{BL}$, $\mu,\nu \in \mathcal{M}(\R)$. This metric is closely related to the Wasserstein distance $W_1$ (see e.g.~\cite{bogachev2007}*{Section 8.3}), but has the advantage of being defined on all of $\mathcal{M}(\R)$. It is evident that $d$ is a $w^*$-lower semicontinuous metric on $\mathcal{M(\R)}$. Sometimes in the literature $\Xi$ is defined differently, with $\pn{f}{\infty}+\Lip(f)$ replaced by $\max\{\pn{f}{\infty},\Lip(f)\}$, but this change yields a corresponding Lipschitz equivalent norm and metric to which all subsequent arguments apply equally, with only some possible changes to constants required.

For our existence result it will serve us to define the space of functions
\[
 \mathcal{X} = \set{\map{m}{\R^+}{B_{\mathcal{M^+(\R)}}}}{m \text{ is $d$-continuous}}.
\]
An element $m \in \mathcal{X}$ will be called $d$-Lipschitz if there is a constant $L>0$ such that $d(m(s),m(t)) \leqslant L|s-t|$ for all $s,t \in \R^+$.

The rest of this section is devoted to establishing a relationship between this metric and particle solutions of \ref{eqn_CH}. For this we follow \cite{clp:12} in part. We assume that $G$ is as in \Cref{ex_CH_Green}. Following \cite{clp:12}*{(2.1)}, define $\map{m^{(N)}}{\R^+}{\mathcal{M(\R)}}$, $N \in \N$, by
\begin{equation}
m^{(N)}(t) = \sum_{i=1}^N p_i(t) \delta_{x_i(t)},
\label{eq:mNdef}
\end{equation}
where $x_i(t)$ and $p_i(t)$ give the position and mass of the $j$th of $N$ particles, respectively. The $x_i$ and $p_i$ also depend on $N$ but we will suppress this dependency. Equation~\eqref{eq:mNdef} is simply a restatement of the particle solutions in Equation \eqref{eq:utrial}, but using different notation to reflect the fact that the delta functions are elements of a Banach space.

Now define $\map{u^{(N)}}{\R}{\R}$, $t \in \R^+$, by $u^{(N)}(t) = T^*m^{(N)}(t) = G * (m^{(N)}(t))$. Observe that, with the identification $u^{(N)}(t,x)=u^{(N)}(t)(x)$, this definition of $u^{(N)}$ agrees with the one given in \eqref{eq:utrial}. We want the $u^{(N)}$ to satisfy \eqref{eqn_weak_CH} (with initial conditions $u^{(N)}(0)$ and $m^{(N)}(0)$); in order for this to be true, it is necessary and sufficient that, for each $N$, the $x_i$ and $p_i$ obey the system of ODEs in Equation~\eqref{eqn_ODEs}.

The idea is that as $N$ increases the solutions $u^{(N)}$ give a better approximation of a solution of \eqref{eqn_weak_CH}, subject to our initial condition \eqref{eqn_init_condition}. To this end, we arrange $x_i(0)$ and $p_i(0)$ (which again depend also on $N$) in such a way that
\begin{equation}\label{eqn_approx_init}
m^{(N)}(0) \stackrel{w^*}{\to} m_0 \qquad\text{as }N\to\infty. 
\end{equation}
We will assume that $\pn{m_0}{1}=1$ (if not we can rescale). According to \eqref{eqn_conserved_momentum}, the total momentum of the particle system is conserved. Therefore
\begin{equation}\label{eqn_mom}
\sum_{i=1}^N p_i(t) = \pn{m^{(N)}(t)}{1} = 1 \qquad\text{for all }t \in \R^+.
\end{equation}

\begin{proposition}\label{prop_unif_Lipschitz}
We have $m^{(N)} \in \mathcal{X}$ for all $N \in \N$. Moreover, each $m^{(N)}$, $N \in \N$, is $d$-Lipschitz, with $\sup_N \Lip(m^{(N)}) < \infty$.
\end{proposition}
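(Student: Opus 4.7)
The plan is to exploit the dual characterisation of $\ndot_{BL}$ and the ODE system \eqref{eqn_ODEs} to obtain a Lipschitz estimate that is independent of $N$. First I would verify that each $m^{(N)}(t)$ lands in $B_{\M^+(\R)}$: the global existence results of Camassa et~al.\ recalled above guarantee $p_i(t) \geqslant 0$ for all $t \in \R^+$, and conservation of total momentum \eqref{eqn_mom} gives $\pn{m^{(N)}(t)}{1} = \sum_i p_i(t) = 1$. Once the Lipschitz estimate below is in hand, $d$-continuity is immediate, so $m^{(N)} \in \mathcal{X}$.

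For the Lipschitz bound, fix $s,t \in \R^+$ and $f \in \Xi$, and split
\[
(m^{(N)}(s) - m^{(N)}(t))(f) = \sum_{i=1}^N p_i(s)\bigl[f(x_i(s))-f(x_i(t))\bigr] + \sum_{i=1}^N \bigl[p_i(s)-p_i(t)\bigr] f(x_i(t)).
\]
The first sum is controlled using $|f(x_i(s))-f(x_i(t))| \leqslant \Lip(f)|x_i(s)-x_i(t)|$ together with $\sum_i p_i(s) = 1$, while the second sum is controlled by $\pn{f}{\infty} \sum_i |p_i(s)-p_i(t)|$. Thus everything reduces to uniform bounds on $|\dot{x}_j|$ and $\sum_j |\dot{p}_j|$.

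From the first equation of \eqref{eqn_ODEs}, since $p_i(t) \geqslant 0$ and $\me^{-|\cdot|/\alpha} \leqslant 1$,
\[
|\dot{x}_j(t)| \leqslant \frac{1}{2\alpha}\sum_i p_i(t) = \frac{1}{2\alpha},
\]
so $|x_j(s)-x_j(t)| \leqslant \tfrac{1}{2\alpha}|s-t|$ for every $j$. For the masses, using $|\sgn(\cdot)\me^{-|\cdot|/\alpha}| \leqslant 1$ and $p_j,p_i \geqslant 0$,
\[
\sum_j |\dot{p}_j(t)| \leqslant \frac{1}{2\alpha^2}\sum_j p_j(t)\sum_{i\neq j}p_i(t) \leqslant \frac{1}{2\alpha^2}\Bigl(\sum_j p_j(t)\Bigr)^2 = \frac{1}{2\alpha^2},
\]
which upon integration gives $\sum_j |p_j(s)-p_j(t)| \leqslant \tfrac{1}{2\alpha^2}|s-t|$.

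Combining these bounds,
\[
(m^{(N)}(s) - m^{(N)}(t))(f) \leqslant \frac{\Lip(f)}{2\alpha}|s-t| + \frac{\pn{f}{\infty}}{2\alpha^2}|s-t| \leqslant L(\Lip(f)+\pn{f}{\infty})|s-t|,
\]
with $L = \max\{\tfrac{1}{2\alpha},\tfrac{1}{2\alpha^2}\}$, and taking the supremum over $f \in \Xi$ yields $d(m^{(N)}(s),m^{(N)}(t)) \leqslant L|s-t|$, uniformly in $N$. The only real subtlety is making sure to use the non-negativity of the $p_i$ and the conservation law \eqref{eqn_mom} in the right places; without non-negativity, the bound $\sum_j p_j(t) \sum_{i\neq j} p_i(t) \leqslant 1$ would fail, and the particle system would in any case no longer be globally defined (as noted in the discussion of peakon/anti-peakon collisions).
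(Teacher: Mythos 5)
Your proof is correct and follows essentially the same route as the paper's: the same decomposition of $m^{(N)}(s)-m^{(N)}(t)$, the same splitting into position and momentum increments, and the same derivative bounds from \eqref{eqn_ODEs} combined with conservation of momentum \eqref{eqn_mom}. The only differences are cosmetic: you compute the constants $\tfrac{1}{2\alpha}=\pn{G}{\infty}$ and $\tfrac{1}{2\alpha^2}=\pn{G'}{\infty}$ explicitly rather than leaving them in terms of $G$, and you spell out why $m^{(N)}(t)\in B_{\M^+(\R)}$, a point the paper takes as read.
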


\begin{proof}
As the first statement of the proposition follows immediately from the second, it is sufficient to prove the second statement only.

Given $s,t \in \R^+$, we have
\begin{equation}\label{eqn_unif_Lipschitz_1}
m^{(N)}(s) - m^{(N)}(t) = \sum_{i=1}^N p_i(s)(\delta_{x_i(s)}-\delta_{x_i(t)}) + (p_i(s)-p_i(t))\delta_{x_i(t)}.
\end{equation}

Let $f \in \Xi$. From \eqref{eqn_unif_Lipschitz_1} we have
\begin{align}
& |(m^{(N)}(s) - m^{(N)}(t))(f)| \nonumber\\
=& \bigg| \sum_{i=1}^N p_i(s)(f(x_i(s))-f(x_i(t))) + (p_i(s)-p_i(t))f(x_i(t)) \bigg| \nonumber\\
\leqslant& \sum_{i=1}^N p_i(s)|x_i(s)-x_i(t)| + \sum_{i=1}^N |p_i(s)-p_i(t)|. \label{eqn_unif_Lipschitz_2}
\end{align}

In view of \eqref{eqn_mom} and the definition of $u^{(N)}$, it is clear that $\pn{u^{(N)}}{\infty} \leqslant \pn{G}{\infty}$, and furthermore $|u^{(N)}_x(t,x)| \leqslant \pn{G'}{\infty}$ whenever $u^{(N)}_x(t,x)$ is defined, i.e.~whenever $x$ is distinct from $x_i(t)$, $1 \leqslant i \leqslant n$. With this in mind, together with \eqref{eqn_p_j_alternative}, we arrive at
\begin{equation}\label{eqn_unif_Lipschitz_3}
|\dot{x}_i(t)| \leqslant \pn{G}{\infty} \quad\text{and}\quad |\dot{p}_i(t)| \leqslant \pn{G'}{\infty}|p_i(t)| \qquad\text{for }0 \leqslant i \leqslant n.
\end{equation}

From \eqref{eqn_ODEs}, \eqref{eqn_mom}, \eqref{eqn_unif_Lipschitz_3} and the Mean Value Theorem we have
\begin{equation}\label{eqn_unif_Lipschitz_4}
|x_i(s)-x_i(t)| \leqslant \pn{G}{\infty}|s-t| \qquad\text{for $1 \leqslant i \leqslant n$}
\end{equation}
and
\begin{equation}\label{eqn_unif_Lipschitz_5}
\sum_{i=1}^N|p_i(s)-p_i(t)| \leqslant \pn{G'}{\infty}|s-t|.
\end{equation}

Therefore, from \eqref{eqn_mom}, \eqref{eqn_unif_Lipschitz_2}, \eqref{eqn_unif_Lipschitz_4} and \eqref{eqn_unif_Lipschitz_5} we have
\begin{equation}\label{eqn_unif_Lipschitz_6}
|(m^{(N)}(s) - m^{(N)}(t))(f)| \leqslant (\pn{G}{\infty} + \pn{G'}{\infty})|s-t|.
\end{equation}

Taking the supremum of \eqref{eqn_unif_Lipschitz_6} over $f \in \Xi$ yields
\begin{equation}\label{eqn_unif_Lipschitz_7}
d(m^{(N)}(t),m^{(N)}(s)) = \pn{m^{(N)}(s) - m^{(N)}(t)}{BL} \leqslant (\pn{G}{\infty} + \pn{G'}{\infty})|s-t|.
\end{equation}
This completes the proof.
\end{proof}

\begin{remark}
We remark that as only a few general details are demanded of the ODE system \eqref{eqn_ODEs} for the conclusion of \Cref{prop_unif_Lipschitz} to hold, this proposition can be adapted very easily to suit a range of other systems; see e.g. \cite{ops2023:gtfe}*{Theorem 4.8}, or the proof that the functions $u^N$ are Lipschitz in time in \cite{clp:12a}*{Theorem 2.8}.
\end{remark}

\section{Getting sufficiently regular solutions}\label{sect_regular}

We would like to obtain weak solutions $u$ of \ref{eqn_CH} that lie in $C^{0,\frac{1}{2}}_b(\R^+;H^1(\R))$, which is the space of $\frac{1}{2}$-H\"older continuous functions $\map{u}{\R^+}{H^1(\R)}$ that are also bounded in the sense that $\sup_{t \in \R^+}\n{u(t)} < \infty$. In this section we develop a general tool that will guarantee this regularity in the case of \ref{eqn_CH} and other situations. For this we require a series of lemmas. The first lemma is elementary; we couldn't find a proof of it in the literature so one is provided for completeness.

\begin{lemma}\label{lem_reg_kernel}
Let $F \in BV(\R)$. Then, given $t \in \R$, we have
\[
\lint{-\infty}{\infty}{|F(x+t)-F(x)|}{x} \leqslant \var(F)|t|.
\]
\end{lemma}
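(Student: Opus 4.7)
The plan is to exploit the representation mentioned at the end of the introduction: if $F \in BV(\R)$ is taken as the left-continuous representative, then $F(x) = \mu(-\infty,x)$ where $\mu$ is the distributional derivative of $F$ and $\pn{\mu}{1} = \var(F)$. This turns the problem into a clean Fubini argument.

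First I would treat the case $t \geqslant 0$. The identity $F(x+t) - F(x) = \mu([x,x+t))$ rewrites as
\[
F(x+t) - F(x) = \lint{-\infty}{\infty}{\ind{[x,x+t)}(y)}{\mu(y)},
\]
so that $|F(x+t) - F(x)| \leqslant \int_{\R}\ind{[x,x+t)}(y)\,\md|\mu|(y)$. Next I would use the elementary observation $\ind{[x,x+t)}(y) = \ind{(y-t,y]}(x)$, integrate in $x$, and swap the order of integration via Tonelli's theorem. The inner Lebesgue integral is exactly $t$, independent of $y$, which yields
\[
\lint{-\infty}{\infty}{|F(x+t) - F(x)|}{x} \leqslant \lint{-\infty}{\infty}{t}{|\mu|(y)} = t\,\pn{\mu}{1} = t\,\var(F),
\]
as desired.

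For $t < 0$, the change of variable $x \mapsto x - t$ inside the integral on the left (or equivalently, rerunning the argument with the roles of $x$ and $x+t$ swapped) gives the same bound with $|t|$ in place of $t$. This closes the proof.

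The only mild obstacle is the bookkeeping between the left-continuous convention on $F$ and the half-open intervals: one must be careful that $F(x+t) - F(x)$ really equals $\mu([x,x+t))$ (and not some interval with different endpoint inclusion) so that the indicator swap is exact and Tonelli produces the factor $t$ cleanly. Since the paper explicitly normalises $F$ to be the left-continuous representative with $F(x) = \mu(-\infty,x)$, this is immediate and the rest is mechanical.
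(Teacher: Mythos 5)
Your proposal is correct and follows essentially the same route as the paper: both write $F(x+t)-F(x)=\mu[x,x+t)$ using the left-continuous representative, bound by the total variation measure, and swap the order of integration (your indicator $\ind{[x,x+t)}(y)=\ind{(y-t,y]}(x)$ is exactly the paper's kernel $w(x,y)$, and Tonelli versus Fubini is immaterial here since the integrand is nonnegative). No gaps.
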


\begin{proof}
Let $\mu \in \mathcal{M}(\R)$ such that $\pn{\mu}{1}=\var(F)$ and $F(x)=\mu(-\infty,x)$ for all $x \in \R$. Given $t > 0$, we have $F(x+t)-F(x)=\mu[x,x+t)$, $x \in \R$. Define $\map{w}{\R^2}{\R}$ by $w(x,y)=1$ if $x \leqslant y < x+t$ and $w(x,y)=0$ otherwise. By Fubini's Theorem we have
\begin{align*}
\lint{-\infty}{\infty}{|F(x+t)-F(x)|}{x} \leqslant& \lint{-\infty}{\infty}{\lint{-\infty}{\infty}{w(x,y)}{|\mu|(y)}}{x}\\
=& \lint{-\infty}{\infty}{\lint{-\infty}{\infty}{w(x,y)}{x}}{|\mu|(y)}\\
=& \lint{-\infty}{\infty}{|t|}{|\mu|(y)} = \pn{\mu}{1}|t| = \var(F)|t|.
\end{align*}
A similar argument applies when $t<0$.
\end{proof}

The next lemma yields a Young-type inequality in terms of the norm $\pndot{BL}$.

\begin{lemma}\label{lem_Lip_1}
Let $F \in BV(\R)$. Then
\[
\pn{F * \mu}{1} \leqslant \pn{F}{BV}\pn{\mu}{BL} \qquad\text{for all }\mu \in \mathcal{M(\R)}.
\]
\end{lemma}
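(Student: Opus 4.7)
The plan is to dualise $\pn{F*\mu}{1}$ against $C_0(\R)$ test functions (via Riesz representation, identifying $F*\mu \in L^1(\R)$ with the measure $(F*\mu)\,\md x$), swap the order of integration using Fubini, and recognise the resulting function of $y$ as one to which the definition of $\pn{\mu}{BL}$ applies.

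Concretely, I would first write
\[
\pn{F*\mu}{1} = \sup\bigg\{\bigg|\lint{-\infty}{\infty}{(F*\mu)(x)g(x)}{x}\bigg| : g \in C_0(\R),\ \pn{g}{\infty}\leqslant 1\bigg\}.
\]
Fixing such a $g$, the finiteness of $\pn{g}{\infty}\pn{F}{1}\pn{\mu}{1}$ lets Fubini go through on $|\mu|\otimes\md x$, giving
\[
\lint{-\infty}{\infty}{(F*\mu)(x)g(x)}{x} = \lint{-\infty}{\infty}{h(y)}{\mu(y)}, \qquad h(y):=\lint{-\infty}{\infty}{F(x-y)g(x)}{x}.
\]

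Next I would estimate $h$. The substitution $u=x-y$ identifies $h$ with $F*\check{g}$, where $\check{g}(t):=g(-t)\in C_0(\R)$, so the standard fact $L^1*C_0\subseteq C_0$ yields $h \in C_0(\R)$. The triangle inequality gives $\pn{h}{\infty}\leqslant\pn{g}{\infty}\pn{F}{1}$, and a further translation combined with \Cref{lem_reg_kernel} yields
\[
|h(y+s)-h(y)| \leqslant \pn{g}{\infty}\lint{-\infty}{\infty}{|F(u-s)-F(u)|}{u} \leqslant \pn{g}{\infty}\var(F)|s|,
\]
where the left-continuous representative convention for $F$ identifies $\var(F)$ with $\essvar(F)$ inside $\pn{F}{BV}$. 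Adding the two estimates gives $\pn{h}{\infty}+\Lip(h) \leqslant \pn{g}{\infty}\pn{F}{BV} \leqslant \pn{F}{BV}$, so $h/\pn{F}{BV} \in \Xi$ (the case $F=0$ being trivial).

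To finish, the definition of $\pn{\cdot}{BL}$ delivers $|\mu(h)|\leqslant\pn{F}{BV}\pn{\mu}{BL}$, and taking the supremum over admissible $g$ closes the argument. I do not anticipate a serious obstacle: the substantive analytic input is entirely inside \Cref{lem_reg_kernel}, and everything else is bookkeeping around the duality, Fubini's applicability, and the representative convention.
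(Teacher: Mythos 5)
Your proof is correct, and it handles the one genuinely delicate point of this lemma by a different (and arguably cleaner) device than the paper. Both arguments share the same skeleton --- apply Fubini to transfer the $L^1$-duality onto $\mu$, then recognise the resulting function of $y$ as bounded and Lipschitz via \Cref{lem_reg_kernel} --- but they diverge in the choice of test function. The paper pairs $F*\mu$ directly with $s(x)=\sgn(F*\mu)(x)$, which produces a bounded Lipschitz function $\tilde F(y)=\int s(x)F(x-y)\,\md x$ that need \emph{not} lie in $C_0(\R)$; this forces the subsequent truncation with the tent functions $H_k$, the $\midd$ construction, and a Dominated Convergence passage to the limit. You instead dualise against arbitrary $g\in C_0(\R)$ with $\pn{g}{\infty}\leqslant 1$ (legitimate, since $F*\mu\in L^1(\R)$ and the paper's own duality formula for $\pn{\cdot}{1}$ on $\M(\R)\equiv C_0(\R)^*$ gives the supremum representation), and then $h$ is the convolution of an $L^1$ function with a $C_0$ function, hence automatically in $C_0(\R)$; the entire truncation step evaporates. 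The price is negligible --- the $L^1$--$C_0$ duality is already set up in the paper's introduction. One cosmetic slip: with $h(y)=\int F(x-y)g(x)\,\md x$ the correct identification is $h=\check F * g$ where $\check F(t)=F(-t)$, not $F*\check g$ (the two agree only when $F$ is even); since $\check F$ is equally in $L^1(\R)$ with the same variation, this does not affect the conclusion $h\in C_0(\R)$ or any of the estimates. Everything else --- the Fubini justification via $\pn{g}{\infty}\pn{F}{1}\pn{\mu}{1}<\infty$, the bound $\pn{h}{\infty}+\Lip(h)\leqslant\pn{F}{BV}$, the appeal to the left-continuous representative convention equating $\var(F)$ with $\essvar(F)$, and the final supremum over $g$ --- is sound.
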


\begin{proof}
Let $\mu \in \mathcal{M(\R)}$. First observe that
\begin{equation}\label{eqn_Tonelli_1}
\lint{-\infty}{\infty}{\lint{-\infty}{\infty}{|F(x-y)|}{x}}{|\mu|(y)} = \pn{F}{1}\pn{\mu}{1} < \infty.
\end{equation}

Set $s(x)=\sgn(F * \mu)(x)$, $x \in \R$. Given \eqref{eqn_Tonelli_1}, we can apply Fubini's Theorem to obtain 
\begin{align}
\pn{F * \mu}{1} =& \lint{-\infty}{\infty}{|(F * \mu)(x)|}{x} \nonumber\\ 
=& \lint{-\infty}{\infty}{\lint{-\infty}{\infty}{s(x)F(x-y)}{\mu(y)}}{x} = \lint{-\infty}{\infty}{\tilde{F}(y)}{\mu(y)}, \label{eqn_Lip_1}
\end{align}
where $\map{\tilde{F}}{\R}{\R}$ is given by
\[
\tilde{F}(y) = \lint{-\infty}{\infty}{s(x)F(x-y)}{x}.
\]
Now $\|\tilde{F}\|_\infty \leqslant \pn{F}{1}$ and by \Cref{lem_reg_kernel} we have
\begin{equation}\label{eqn_Lip_1a}
|\tilde{F}(y)-\tilde{F}(z)| \leqslant \lint{-\infty}{\infty}{|F(x-y)-F(x-z)|}{x} \leqslant \var(F)|y-z|.
\end{equation}
Hence $\tilde{F}$ is a bounded Lipschitz function. However, to finish the proof we need to consider functions in $C_0(\R)$, and $\tilde{F}$ may not be an element of this space.

To this end, let $\midd\{a, b, c\}$ denote the number $a$, $b$ or $c$ that lies (not necessarily strictly) between the other two, and extend this notation pointwise to functions. Given $k \in \N$, define the Lipschitz maps $\map{H_k,\tilde{F}_k}{\R}{\R}$ by $H_k(y)=\max\{\var(F)(k-|y|),0\}$ and $\tilde{F}_k=\midd\{-H_k,\tilde{F},H_k\}$. Then $\tilde{F}_k \in C_0(\R)$, $\|\tilde{F}_k\|_\infty \leqslant \|\tilde{F}\|_\infty \leqslant \pn{F}{1}$ and $\Lip(\tilde{F}_k) \leqslant \var(F)$ for all $k \in \N$. Hence by this and \eqref{eqn_Lip_1a} we have
\[
\|\tilde{F}_k\|_\infty + \Lip(\tilde{F}_k) \leqslant \pn{F}{BV} \qquad\text{for all }k \in \N.
\]
Consequently
\[
\lint{-\infty}{\infty}{\tilde{F}_k(y)}{\mu(y)} \leqslant \pn{F}{BV}\pn{\mu}{BL} \qquad\text{for all }k \in \N.
\]
Hence by this, \eqref{eqn_Lip_1} and the Dominated Convergence Theorem we obtain
\[
\pn{F * \mu}{1} = \lint{-\infty}{\infty}{\tilde{F}(y)}{\mu(y)} = \lim_{k\to\infty}\lint{-\infty}{\infty}{\tilde{F}_k(y)}{\mu(y)} \leqslant \pn{F}{BV}\pn{\mu}{BL}. \qedhere
\]
\end{proof}

\begin{lemma}\label{lem_Lip_2}
There exists $L>0$ such that
\begin{equation}\label{eqn_Lip_2}
\n{T^*\mu} \leqslant L^{\frac{1}{2}}\pn{\mu}{BL}^{\frac{1}{2}} \qquad\text{for all }\mu \in B_{\mathcal{M(\R)}}.
\end{equation}
\end{lemma}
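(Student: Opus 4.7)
The plan is to bound the $H^n(\R)$ norm of $T^*\mu = G*\mu$ by interpolating between the $L^\infty$ and $L^1$ norms of each derivative $(G*\mu)^{(k)} = G^{(k)}*\mu$, using the trivial $L^\infty$ bound on one side and \Cref{lem_Lip_1} on the other side.

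Explicitly, I would first observe that by \eqref{eqn_conv_deriv},
\[
\n{T^*\mu}^2 = \n{G*\mu}^2 = \sum_{k=0}^n \pn{G^{(k)}*\mu}{2}^2,
\]
and then use the elementary interpolation $\pn{f}{2}^2 \leqslant \pn{f}{\infty}\pn{f}{1}$ for each summand. For the $L^\infty$ factor, Young's inequality gives $\pn{G^{(k)}*\mu}{\infty} \leqslant \pn{G^{(k)}}{\infty}\pn{\mu}{1}\leqslant\pn{G^{(k)}}{\infty}$, since $\mu \in B_{\mathcal{M}(\R)}$. For the $L^1$ factor, I would apply \Cref{lem_Lip_1} to obtain $\pn{G^{(k)}*\mu}{1} \leqslant \pn{G^{(k)}}{BV}\pn{\mu}{BL}$.

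The only nontrivial verification is that $G^{(k)} \in BV(\R)$ for every $0 \leqslant k \leqslant n$, which is what allows us to invoke \Cref{lem_Lip_1} at each level. For $k=n$ this is part of the standing hypothesis on $G$. For $0 \leqslant k<n$, the functions $G^{(k)}$ are absolutely continuous and integrable with integrable derivative $G^{(k+1)}$ (since $G \in W^{n,1}(\R)$), from which it is standard that $G^{(k)} \in BV(\R)$ with $\var(G^{(k)}) = \pn{G^{(k+1)}}{1}$; I would dispatch this in a single sentence.

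Combining the bounds then gives
\[
\n{T^*\mu}^2 \leqslant \bigg(\sum_{k=0}^n \pn{G^{(k)}}{\infty}\pn{G^{(k)}}{BV}\bigg)\pn{\mu}{BL},
\]
so the statement holds with $L := \sum_{k=0}^n \pn{G^{(k)}}{\infty}\pn{G^{(k)}}{BV}$, which is finite by the discussion of $G$ in \Cref{sect_Green}. I do not anticipate any real obstacle here; the point of the lemma is that the $\tfrac{1}{2}$-power loss is exactly the price paid for trading the $L^1$ norm of $\mu$ for the weaker $\pndot{BL}$.
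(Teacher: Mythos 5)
Your argument is essentially the paper's own: both proofs interpolate via $\pn{G^{(k)}*\mu}{2}^2 \leqslant \pn{G^{(k)}*\mu}{\infty}\pn{G^{(k)}*\mu}{1}$, bound the $L^\infty$ factor trivially and the $L^1$ factor by \Cref{lem_Lip_1}, the only difference being that the paper further estimates $\pn{G^{(k)}}{\infty} \leqslant \var(G^{(k)})$ and so takes $L = \sum_{k=0}^n \var(G^{(k)})\pn{G^{(k)}}{BV}$ rather than your (smaller, equally valid) constant. Your explicit check that $G^{(k)} \in BV(\R)$ for $k<n$ is a harmless addition; the proposal is correct.
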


\begin{proof} Define
\[
 L = \sum_{k=0}^n \var(G^{(k)})\pn{G^{(k)}}{BV}.
\]
By \Cref{lem_Lip_1} we have
\begin{equation}
\|G^{(k)} * \mu\|_1 \leqslant \pn{G^{(k)}}{BV}\pn{\mu}{BL} \qquad\text{for }\mu \in \mathcal{M(\R)}\text{ and }k \leqslant n. \label{eqn_Lip_2_a}
\end{equation}
Given $\mu \in B_{\mathcal{M(\R)}}$ and $k \leqslant n$, we have
\[
\|G^{(k)} * \mu\|_\infty \leqslant \|G^{(k)}\|_\infty \leqslant \var(G^{(k)}),
\]
(where we use essential supremum and variation in the case $k=n$). Hence by this, \eqref{eqn_conv_deriv} and \eqref{eqn_Lip_2_a} we can estimate
\[
\n{T^*\mu}^2 = \sum_{k=0}^n \lint{-\infty}{\infty}{(G^{(k)} * \mu)(x)^2}{x} \leqslant \sum_{k=0}^n \var(G^{(k)})\|G^{(k)} * \mu\|_1 \leqslant L\pn{\mu}{BL}. \qedhere
\]
\end{proof}

We conclude this section with a proposition that will provide us with the desired regularity of solutions. This proposition is also used to show, with minimal effort, that solutions of the Geometric Thin-Film Equation \cite{ops2023:gtfe}*{Theorem 4.8} also have the regularity that we would wish for.

\begin{proposition}\label{prop_reg_solutions}
Let $m \in \mathcal{X}$ and define $\map{u}{\R^+}{H^n(\R)}$ by $u(t)=T^*m(t)$. Then $u \in C_b(\R^+;H^n(\R))$. Moreover, if $m$ is $d$-Lipschitz then $u \in C^{0,\frac{1}{2}}_b(\R^+;H^n(\R))$.
\end{proposition}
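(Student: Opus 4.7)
The plan is to split the statement into two parts: (i) $u$ is uniformly bounded in $H^n(\R)$; (ii) $u$ has the claimed time-regularity (continuity, or $\frac{1}{2}$-H\"older continuity, depending on the hypothesis on $m$).

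For (i), since $m(t) \in B_{\M^+(\R)}$ gives $\pn{m(t)}{1} \leqslant 1$ for every $t \in \R^+$, and since \Cref{prop_T_OK} yields $\n{T^*} = \n{T} = \n{G}$, we obtain $\n{u(t)} = \n{T^*m(t)} \leqslant \n{G}$ uniformly in $t$. This disposes of the boundedness claim.

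For (ii), the idea is to convert $d$-control on $m$ into $H^n$-control on $u$ via \Cref{lem_Lip_2}. Fix $s,t \in \R^+$ and set $\mu_{s,t} := m(s) - m(t) \in \M(\R)$. Then $\pn{\mu_{s,t}}{1} \leqslant 2$, so $\tfrac{1}{2}\mu_{s,t} \in B_{\M(\R)}$; applying \Cref{lem_Lip_2} to $\tfrac{1}{2}\mu_{s,t}$ and scaling back by $2$ yields
\[
\n{u(s)-u(t)} = \n{T^*\mu_{s,t}} \leqslant \sqrt{2L}\, d(m(s),m(t))^{\frac{1}{2}}.
\]
From here both remaining claims follow immediately. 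If $m$ is merely $d$-continuous, the right-hand side tends to $0$ as $s \to t$, giving $u \in C_b(\R^+;H^n(\R))$. If $m$ is $d$-Lipschitz with constant $L' := \Lip(m)$, the inequality upgrades to $\n{u(s)-u(t)} \leqslant \sqrt{2LL'}\,|s-t|^{\frac{1}{2}}$, which combined with the uniform bound proves $u \in C^{0,\frac{1}{2}}_b(\R^+;H^n(\R))$.

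I do not anticipate a serious obstacle here: \Cref{lem_Lip_2} already performs the delicate work of converting the BL-norm into the $H^n$-norm (with a square-root loss) via the Young-type estimate and the Hilbert inner-product structure, while \Cref{prop_T_OK} supplies the operator-norm bound needed for boundedness. The only minor subtlety worth flagging in the write-up is that $m(s)-m(t)$ itself need not lie in $B_{\M(\R)}$, but its total variation is at most $2$, handled by the trivial rescaling above.
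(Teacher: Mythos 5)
Your proposal is correct and follows essentially the same route as the paper's own proof: the uniform bound comes from $\n{T^*}=\n{T}$ via \Cref{prop_T_OK}, and the modulus of continuity comes from applying \Cref{lem_Lip_2} to $\tfrac{1}{2}(m(s)-m(t)) \in B_{\M(\R)}$ and rescaling, yielding the identical constant $(2L)^{1/2}$. No gaps.
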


\begin{proof}
We observe first that $u$ is well-defined by \Cref{prop_T_OK}, and that
\[
\n{u(t)} \leqslant \n{T^*}\pn{m(t)}{1} \leqslant \n{T^*} = \n{T}\qquad\text{ for all }t \in \R^+,  
\]
so $u$ is bounded with respect to $\ndot$. Now take $L>0$ from \Cref{lem_Lip_2}. Let $s,t \in \R^+$. As $m(s),m(t) \in B_{\mathcal{M(\R)}}$ we have $\frac{1}{2}(m(s)-m(t)) \in B_{\mathcal{M(\R)}}$. By \Cref{lem_Lip_2} we see that
\begin{align*}
\n{u(s) - u(t)} =& \n{T^*(m(s)-m(t))}\\
=& 2\n{T^*{\ts \frac{1}{2}}(m(s)-m(t))}\\
\leqslant& (2L)^{\frac{1}{2}}\pn{m(s)-m(t)}{BL}^{\frac{1}{2}} = (2L)^{\frac{1}{2}}d(m(s),m(t))^{\frac{1}{2}}.
\end{align*}
As $m$ is $d$-continuous, we have $u \in C_b(\R^+;H^n(\R))$. If moreover $m$ is $d$-Lipschitz, then $u \in C^{0,\frac{1}{2}}_b(\R^+;H^n(\R))$.
\end{proof}

\section{Existence of weak solutions}\label{sect_exist}

In this section we draw together the threads of the previous sections to obtain weak solutions of \ref{eqn_CH} having the desired regularity.

We begin by making some assertions that can apply in a more general context before concentrating on the CH case at the end. First, we make use of a metric Arzel\`a-Ascoli existence result. Such results have also been used in \cite{dl:15}*{Section 4} for example.

Recall that $B_{\mathcal{M^+(\R)}}$ is compact in the $w^*$-topology. Given that $d$ is a $w^*$-lower semicontinuous metric on this set, we can readily deduce the following result from \cite{ags:05}*{Proposition 3.3.1}.

\begin{proposition}\label{prop_exist}
Let $(m^{(N)}) \subseteq \mathcal{X}$ be an equicontinuous sequence with respect to $d$. Then there exists a subsequence $(m^{(N_k)})$ and $m \in \mathcal{X}$ such that
\[
m^{(N_k)}(t) \stackrel{w^*}{\to} m(t) \qquad\text{for all }t \in \R^+.
\]
Moreover, if there exists $L>0$ such that $\sup_N\Lip(m^{(N)}) \leqslant L$ with respect to $d$, then $\Lip(m) \leqslant L$. 
\end{proposition}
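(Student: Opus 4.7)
The plan is to deduce this from the metric Arzel\`a-Ascoli result \cite{ags:05}*{Proposition 3.3.1}, whose hypotheses are essentially: a sequentially compact Hausdorff topology $\sigma$ together with a $\sigma$-lower semicontinuous (pseudo)metric, plus a $\sigma$-equicontinuous sequence. The only extra wrinkle is that the time domain $\R^+$ is not compact, so I will combine the AGS result with a standard Cantor diagonal argument.

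First I would verify the hypotheses on a compact subinterval $[0,T]$. By Banach-Alaoglu together with separability of $C_0(\R)$, the closed unit ball of $\M(\R)$ is $w^*$-compact and $w^*$-metrisable, so the $w^*$-closed subset $B_{\M^+(\R)}$ is $w^*$-sequentially compact. Combined with the $w^*$-lower semicontinuity of $d$ (noted just before the statement) and the $d$-equicontinuity of $(m^{(N)})$, this places us directly in the setting of \cite{ags:05}*{Proposition 3.3.1}. Applied on $[0,T]$, it yields a subsequence $(m^{(N_k^T)})$ and a $d$-continuous map $\map{m_T}{[0,T]}{B_{\M^+(\R)}}$ with $m^{(N_k^T)}(t) \stackrel{w^*}{\to} m_T(t)$ for every $t \in [0,T]$.

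Second I would run a standard diagonal construction over $T = 1,2,3,\dots$: successively refine the previous subsequence on $[0,T]$ and then take the diagonal. This gives a single subsequence $(m^{(N_k)})$ that converges in $w^*$ at every $t \in \R^+$. Because $w^*$ is Hausdorff on $B_{\M^+(\R)}$, the maps $m_T$ agree on overlaps and patch into a single $\map{m}{\R^+}{B_{\M^+(\R)}}$ with $m \restrict{[0,T]} = m_T$; in particular $m$ is $d$-continuous on every $[0,T]$, hence $d$-continuous on $\R^+$, i.e.~$m \in \mathcal{X}$.

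Third, for the preservation of the Lipschitz constant, I would use the $w^*$-lower semicontinuity of $d$ coordinatewise. Fix $s,t \in \R^+$; since $m^{(N_k)}(s) \stackrel{w^*}{\to} m(s)$ and $m^{(N_k)}(t) \stackrel{w^*}{\to} m(t)$, lower semicontinuity gives
\[
d(m(s),m(t)) \leqslant \liminf_{k\to\infty} d(m^{(N_k)}(s),m^{(N_k)}(t)) \leqslant L|s-t|,
\]
so $\Lip(m) \leqslant L$. The main obstacle, such as it is, lies in the first step: matching the precise hypotheses of \cite{ags:05}*{Proposition 3.3.1}, in particular making sure that the pair (sequentially $w^*$-compact target, $w^*$-lsc metric) really fits its framework. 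Once that is checked, the diagonalisation and the lsc argument for $\Lip(m)$ are routine.
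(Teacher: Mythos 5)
Your proposal is correct and follows essentially the same route as the paper: both deduce the existence of the convergent subsequence and the limit $m \in \mathcal{X}$ from \cite{ags:05}*{Proposition 3.3.1} (using $w^*$-sequential compactness of $B_{\M^+(\R)}$ and the $w^*$-lower semicontinuity of $d$), and both obtain $\Lip(m) \leqslant L$ by the identical coordinatewise lower-semicontinuity estimate. The only difference is that you explicitly supply the diagonalisation over the intervals $[0,T]$ to pass from the compact time domain of the cited result to $\R^+$, a detail the paper's proof leaves implicit with ``follows straightaway''.
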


\begin{proof}
The existence of $(m^{(N_k)}(t)) \subseteq \mathcal{X}$ and $m \in \mathcal{X}$ follows straightaway from \cite{ags:05}*{Proposition 3.3.1}. To see the last part, assume $\sup_N\Lip(m^{(N)}) \leqslant L$ with respect to $d$. Then given $s,t \in \R^+$, by the $w^*$-lower semicontinuity of $d$ we have
\[
d(m(s),m(t)) \leqslant \liminf_{k\to\infty} d(m^{(N_k)}(s),m^{(N_k)}(t)) \leqslant L|s-t|. \qedhere
\]
\end{proof}

In the next series of results, we assume that $(m^{(N)}) \subseteq \mathcal{X}$ and $m \in \mathcal{X}$ satisfy
\[
m^{(N)}(t) \stackrel{w^*}{\to} m(t)\qquad\text{ for all }t \in \R^+. 
\]
In accordance with \Cref{prop_reg_solutions}, define $u,u^{(N)} \in C_b(\R^+;H^n(\R))$ by
\[
 u(t) = T^*m(t) = G * (m(t)) \quad\text{and}\quad u^{(N)}(t) = T^*m^{(N)}(t) = G * (m^{(N)}(t)),
\]
for $t \in \R^+$ and $N \in \N$. As per the Introduction, we identify $u$ and the $u^{(N)}$ as functions on $\Omega:=\R^+ \times \R$ in the natural way by writing $u(t,x)=u(t)(x)$ and $u^{(N)}(t,x)=u^{(N)}(t)(x)$, $(t,x) \in \Omega$.

The next step is to show that $\partial^k_x u^{(N)} \to \partial^k_x u$ in the topology of $L^1_{\mathrm{loc}}(\Omega)$ for $0\leqslant k \leqslant n$. To this end, define $\Omega_R = [0,R] \times [-R,R]$ for all $R>0$. Given a function $\map{f}{\Omega}{\R}$, define $\map{f\restrict{R}}{\Omega}{\R}$ by $f\restrict{R} \;= f\cdot\ind{\Omega_R}$, where $\ind{\Omega_R}$ denotes the indicator function of $\Omega_R$ and $\cdot$ denotes pointwise product. Convergence in $L^1_{\mathrm{loc}}(\Omega)$ follows from the next proposition.

\begin{proposition}\label{prop_convergence_L1loc}
Given $R > 0$ and $k \leqslant n$, we have
\begin{equation}\label{eqn_L1loc_1}
\lim_{N\to\infty}\pn{\partial^k_x u^{(N)}\restrict{R} - \,\partial^k_x u\restrict{R}}{1} = 0. 
\end{equation}
\end{proposition}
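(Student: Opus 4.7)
I would prove the proposition by reducing, via Fubini, the $L^1(\Omega_R)$-convergence to showing for each fixed $t \in [0,R]$ that
\[
g_N(t) := \int_{-R}^R |\partial^k_x u^{(N)}(t,x) - \partial^k_x u(t,x)|\,\md x \;\longrightarrow\; 0,
\]
and then applying dominated convergence in $t$. Writing $H_k := G^{(k)}$ when $k<n$ and $H_k := F$ (the $BV$-representative) when $k=n$, \eqref{eqn_conv_deriv} gives $\partial^k_x u^{(N)}(t) = H_k * m^{(N)}(t)$ a.e., and the uniform bound $|H_k * \mu| \leqslant \pn{H_k}{\infty}$ for $\mu \in B_{\M(\R)}$ shows that $g_N$ is dominated on $[0,R]$ by the constant $4R\pn{H_k}{\infty}$ (which is finite for all $k \leqslant n$). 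Hence, once the pointwise-in-$t$ convergence is in hand, dominated convergence on $[0,R]$ and Fubini close the argument.

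The case $k<n$ is straightforward because $G^{(k)} \in C_0(\R)$: for each $x \in [-R,R]$ the function $y \mapsto G^{(k)}(x - y)$ lies in $C_0(\R)$, so the hypothesis $m^{(N)}(t) \stackrel{w^*}{\to} m(t)$ yields pointwise convergence of $(G^{(k)} * m^{(N)}(t))(x)$; combined with the uniform bound $\pn{G^{(k)}}{\infty}$, dominated convergence on $[-R,R]$ delivers $g_N(t) \to 0$. The main obstacle lies in the case $k=n$: the $BV$-representative $F$ of $G^{(n)}$ is in general not continuous---indeed, in the Camassa--Holm setting, $G'$ has a jump at $0$---so $y \mapsto F(x-y)$ need not belong to $C_0(\R)$ and the $w^*$-convergence of $m^{(N)}(t)$ cannot be tested against it directly.

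I would resolve this via an $L^1$-approximation. Given $\ep>0$, pick $F^\ep \in C_c(\R)$ with $\pn{F - F^\ep}{1} < \ep$, using density of $C_c(\R)$ in $L^1(\R)$. Running the $C_0$-argument above with $F^\ep$ in place of $F$ gives $\int_{-R}^R |(F^\ep * m^{(N)}(t))(x) - (F^\ep * m(t))(x)|\,\md x \to 0$ as $N \to \infty$, while Young's inequality combined with $\pn{m^{(N)}(t)}{1},\pn{m(t)}{1} \leqslant 1$ yields
\[
 \pn{(F - F^\ep) * \mu}{1} \leqslant \pn{F - F^\ep}{1}\pn{\mu}{1} \leqslant \ep \qquad (\mu = m^{(N)}(t) \text{ or } m(t)).
\]
A triangle inequality then gives $\limsup_N g_N(t) \leqslant 2\ep$, and since $\ep$ is arbitrary, $g_N(t) \to 0$ in the case $k = n$ as well. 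Combining all three cases and passing from $g_N$ to the double integral via Fubini completes the proof.
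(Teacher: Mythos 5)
Your proposal is correct, and the outer structure (reduce to fixed $t$, dominate by a constant, then Fubini and dominated convergence in $t$) coincides with the paper's. The difference lies in how the key pointwise-in-$t$ statement is proved. The paper isolates it as a lemma: for $F \in BV(\R)$ and $\mu^{(N)} \stackrel{w^*}{\to} 0$, one has $\int_{-R}^R |(F*\mu^{(N)})(x)|\,\md x \to 0$; this is proved by contradiction via duality, writing the integral as $\int \tilde{F}_N\,\md\mu^{(N)}$ with $\tilde{F}_N(y)=\int_{-R}^R s_N(x)F(x-y)\,\md x$, showing the $\tilde{F}_N$ form a uniformly bounded, equi-Lipschitz family in $C_0(\R)$ (this is where the BV hypothesis enters, through \Cref{lem_reg_kernel}), and extracting a uniformly convergent subsequence by Arzel\`a--Ascoli to contradict the $w^*$-convergence. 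You instead approximate $F$ in $L^1(\R)$ by $F^\ep \in C_c(\R)$, handle the $C_0$ part by testing the $w^*$-convergence directly against $y \mapsto F^\ep(x-y)$ and applying dominated convergence on $[-R,R]$, and control the remainder by Young's inequality $\pn{(F-F^\ep)*\mu}{1} \leqslant \pn{F-F^\ep}{1}\pn{\mu}{1}$. Your route is more elementary (no compactness extraction, no argument by contradiction) and in fact more general: it only uses $F \in L^1(\R)$, so the bounded-variation hypothesis is not needed for this particular step, whereas the paper's lemma genuinely relies on it to obtain equi-Lipschitzness. What the paper's version buys is a reusable quantitative mechanism (the Lipschitz estimate of \Cref{lem_reg_kernel}) that is also the engine behind \Cref{lem_Lip_1,lem_Lip_2}, so the BV machinery is doing double duty elsewhere; your argument is self-contained for the convergence claim alone. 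One cosmetic point: the phrase ``all three cases'' is slightly misleading since there are only two cases in $k$ plus the final Fubini step, but this does not affect correctness.
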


We require a lemma in order to prove this result.

\begin{lemma}\label{lem_L1loc_1}
Let $F \in BV(\R)$. Suppose that $(\mu^{(N)}) \subseteq \M(\R)$ satisfies $\mu^{(N)} \stackrel{w^*}{\to} 0$. Then given $R>0$ we have
\[
\lim_{N\to\infty}\lint{-R}{R}{|(F * \mu^{(N)})(x)|}{x} = 0. 
\]
\end{lemma}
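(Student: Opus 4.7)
The plan is to rewrite the target integral via Fubini as an integral of an auxiliary function against $\mu^{(N)}$, then combine an Arzel\`a-Ascoli compactness argument for this family of auxiliary functions with the weak-$*$ convergence through a subsequence principle. First, by the Banach-Steinhaus theorem, $M := \sup_N \pn{\mu^{(N)}}{1} < \infty$. Since $F \in BV(\R)$ is also integrable (following the paper's convention) and its left-continuous representative satisfies $F(x) = \mu(-\infty,x)$ with $\var(F) = \pn{\mu}{1}$, integrability forces $\mu(\R) = 0$, so $F(x) \to 0$ as $|x| \to \infty$. Setting $s^{(N)}(x) := \sgn((F*\mu^{(N)})(x))\ind{[-R,R]}(x)$, Fubini (justified by $\int_{-R}^R\int|F(x-y)|\,d|\mu^{(N)}|(y)\,dx \leqslant \pn{F}{1}M < \infty$) would give
\[
\lint{-R}{R}{|(F*\mu^{(N)})(x)|}{x} = \lint{-\infty}{\infty}{g^{(N)}(y)}{\mu^{(N)}(y)}, \qquad g^{(N)}(y) := \lint{-R}{R}{s^{(N)}(x)F(x-y)}{x}.
\]

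Second, I would show that $\{g^{(N)}\}$ is relatively compact in $C_0(\R)$. Uniform boundedness is immediate from $|g^{(N)}(y)| \leqslant \pn{F}{1}$. Equicontinuity follows from \Cref{lem_reg_kernel}, which yields
\[
|g^{(N)}(y)-g^{(N)}(z)| \leqslant \lint{-\infty}{\infty}{|F(x-y)-F(x-z)|}{x} \leqslant \var(F)|y-z|.
\]
Tightness is read off from $|g^{(N)}(y)| \leqslant \int_{-R-y}^{R-y}|F(u)|\,du$, which tends to $0$ as $|y|\to\infty$ uniformly in $N$ since the bound does not depend on $N$ and $F \in L^1(\R)$. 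In particular $g^{(N)} \in C_0(\R)$ for each $N$, and Arzel\`a-Ascoli delivers the desired relative compactness.

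Finally, a standard subsequence argument closes the proof: from any subsequence of $(\int g^{(N)}\,d\mu^{(N)})$ I extract a further subsequence (reindexed $N$) along which $g^{(N)} \to g$ uniformly for some $g \in C_0(\R)$, and then
\[
\bigg|\lint{-\infty}{\infty}{g^{(N)}(y)}{\mu^{(N)}(y)}\bigg| \leqslant M\pn{g^{(N)}-g}{\infty} + \bigg|\lint{-\infty}{\infty}{g(y)}{\mu^{(N)}(y)}\bigg| \longrightarrow 0,
\]
using $\mu^{(N)} \stackrel{w^*}{\to} 0$ applied to the fixed $g \in C_0(\R)$. The main technical point is the tightness step: it relies on $s^{(N)}$ being supported on the bounded set $[-R,R]$ together with $F \in L^1(\R)$, which is what places $g^{(N)}$ in $C_0(\R)$ — in contrast with \Cref{lem_Lip_1}, where the analogous function did not vanish at infinity and the $\midd$-trick was required to compensate.
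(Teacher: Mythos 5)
Your proof is correct and follows essentially the same route as the paper's: the same Fubini rewriting against the auxiliary functions $\tilde{F}_N$ (your $g^{(N)}$), the same uniform bound, equi-Lipschitz estimate via \Cref{lem_reg_kernel} and tightness placing the family relatively compactly in $C_0(\R)$, and the same use of Arzel\`a--Ascoli combined with $w^*$-convergence. The only difference is presentational: you run a direct subsequence-of-subsequences argument where the paper argues by contradiction.
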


\begin{proof}
We assume otherwise and derive a contradiction. We follow the proof of \Cref{lem_Lip_1} to some extent. By the Uniform Boundedness Theorem we can assume without loss of generality that $\pn{\mu^{(N)}}{1} \leqslant 1$ for all $N$. Fix $R>0$. Given $N \in \N$, define $s_N(x)=\sgn(F * \mu^{(N)})(x)$, $x \in \R$. By Fubini's Theorem we have 
\begin{align}
\lint{-R}{R}{|(F * \mu^{(N)})(x)|}{x} =& \lint{-R}{R}{s_N(x)(F * \mu^{(N)})(x)}{x} \nonumber\\ 
=& \lint{-R}{R}{\lint{-\infty}{\infty}{s_N(x)F(x-y)}{\mu^{(N)}(y)}}{x} \nonumber\\
=& \lint{-\infty}{\infty}{\tilde{F}_N(y)}{\mu^{(N)}(y)}, \label{eqn_L1loc_3}
\end{align}
where $\map{\tilde{F}_N}{\R}{\R}$ is given by
\[
\tilde{F}_N(y) = \lint{-R}{R}{s_N(x)F(x-y)}{x}.
\]
We shall also define $\map{\tilde{F}}{\R}{\R}$ by
\[
\tilde{F}(y) = \lint{-R}{R}{|F(x-y)|}{x}.
\]
It is evident that $|\tilde{F}_N(y)| \leqslant \tilde{F}(y)$ and $\|\tilde{F}_N\|_\infty \leqslant \|\tilde{F}\|_\infty \leqslant \pn{F}{1}$ for all $y \in \R$ and $N \in \N$, and similarly to \eqref{eqn_Lip_1a} we see that $\tilde{F}$ and the $\tilde{F}_N$ are Lipschitz functions with $\Lip(\tilde{F}), \Lip(\tilde{F}_N) \leqslant \var(F)$ for all $N \in \N$.

We claim further that $\tilde{F}$, and thus each $\tilde{F}_N$, is an element of $C_0(\R)$. Indeed, let $\ep>0$. As $F \in L^1(\R)$, there exists $M>0$ such that 
\[
 \lint{|x| > M}{}{|F(x)|}{x} \leqslant \ep.
\]
Hence, if $|y| > R+M$ then $|x-y| > M$ whenever $|x| < R$, thus $\tilde{F}(y) \leqslant \ep$ whenever $|y|>R+M$. This proves the claim.

By our assumption and \eqref{eqn_L1loc_3}, there exists $\eta > 0$ and a subsequence of the $\tilde{F}_N$, labelled in the same way, such that
\begin{equation}
 \lint{-\infty}{\infty}{\tilde{F}_N(y)}{\mu^{(N)}(y)} \geqslant \eta \qquad\text{for all }N \in \N. \label{eqn_L1loc_4}
\end{equation}
Since $\sup_N \Lip(\tilde{F}_N) \leqslant \var(F)$, the $\tilde{F}_N$ form an equi-Lipschitz family. Hence by an Arzel\`a-Ascoli argument, there exists a subsequence of the $\tilde{F}_N$, again labelled in the same way, and a Lipschitz function $\map{H}{\R}{\R}$ such that, for all $N \in \N$, the restrictions of the $\tilde{F}_N$ to $[-q,q]$ converge uniformly to the restriction of $H$ to $[-q,q]$.

We claim we have $H \in C_0(\R)$ and uniform convergence of the $\tilde{F}_N$ to $H$ on $\R$, i.e. ${\|\tilde{F}_N-H\|_\infty} \to 0$. Indeed, given $\ep>0$, let $q \in \N$ such that $\tilde{F}(y) \leqslant \frac{1}{2}\ep$ whenever $y > q$. Then $|\tilde{F}_N(y)| \leqslant \frac{1}{2}\ep$ for all $N \in \N$ and $y > q$, which means that $|H(y)| \leqslant \frac{1}{2}\ep$ for all such $y$. Consequently, $|\tilde{F}_N(y)-H(y)| \leqslant \ep$ for these $y$. Now we appeal to the uniform convergence of the restrictions on $[-q,q]$ to conclude that there exists $N_0 \in \N$ such that $|\tilde{F}_N(y)-H(y)| \leqslant \ep$ for all $N \geqslant N_0$ and $y \in \R$.

In particular, there exists $N_0 \in \N$ such that ${\|\tilde{F}_N-H\|_\infty} \leqslant \frac{\eta}{2}$ whenever $N \geqslant N_0$. Since $\pn{\mu^{(N)}}{1} \leqslant 1$ for all $N \in \N$, we deduce from \eqref{eqn_L1loc_4} that 
\[
 \lint{-\infty}{\infty}{H(y)}{\mu^{(N)}(y)} \geqslant {\ts\frac{1}{2}\eta} \qquad\text{whenever }N \geqslant N_0.
\]
However, this contradicts the fact that $\mu^{(N)} \stackrel{w^*}{\to} 0$. This completes the proof.
\end{proof}

\begin{proof}[Proof of \Cref{prop_convergence_L1loc}]
The first step is to establish
\begin{equation}
 \lim_{N\to\infty}\lint{-R}{R}{|\partial^k_x u^{(N)}(t,x) - \partial^k_x u(t,x)|}{x} = 0 \qquad\text{$k \leqslant n$, $t \in \R^+$ and $R>0$.} \label{eqn_L1loc_5}
\end{equation}
Let $t \in \R^+$. By assumption $m^{(N)}(t)-m(t) \stackrel{w^*}{\to} 0$. According to \eqref{eqn_conv_deriv},
\begin{equation}\label{eqn_L1loc_6}
\partial^k_x u(t) = G^{(k)} * (m(t)) \quad\text{and}\quad \partial^k_x u^{(N)}(t) = G^{(k)} * (m^{(N)}(t))  \qquad\text{a.e.~on $\R$,}
\end{equation}
thus \eqref{eqn_L1loc_5} follows by \Cref{lem_L1loc_1}, given that $G^{(k)} \in BV(\R)$ for $0 \leqslant k \leqslant n$.

By \eqref{eqn_L1loc_6} and the fact that the $m(t)$ and $m^{(N)}(t)$ belong to $B_{\M(\R)}$, for $k \leqslant n$ and $N \in \N$ we have
\begin{equation}\label{eqn_L1loc_7}
\pn{\partial^k_x u}{\infty},\,\pn{\partial^k_x u^{(N)}}{\infty} \leqslant \max_{0\leqslant \ell \leqslant n}\pn{G^{(\ell)}}{\infty},
\end{equation}
where $\pndot{\infty}$ stands for essential supremum wherever necessary. Hence we can finish the proof by applying the Fubini and Dominated Convergence Theorems:
\begin{align*}
 \lim_{N\to\infty}\pn{\partial^k_x u^{(N)}\restrict{R} - \,\partial^k_x u\restrict{R}}{1} =& \lim_{N\to\infty}\lint{0}{R}{\lint{-R}{R}{|\partial^k_x u^{(N)}(t,x) - \partial^k_x u(t,x)|}{x}}{t}\\
 =& 0. \qedhere
\end{align*}
\end{proof}

The next result is what we will use to establish the existence of weak solutions.

\begin{corollary}\label{cor_test_convergence}
Let $\psi \in C^\infty_c(\Omega)$, $0 \leqslant k \leqslant n$ and $\ell \in \N$. Then 
\begin{align}
\lim_{N\to\infty}\lint{-\infty}{\infty}{\psi(0,x)}{(m^{(N)}(0))(x)} &= \lint{-\infty}{\infty}{\psi(0,x)}{(m(0))(x)} \label{eqn_w_convergence_1}
\end{align}
and
\begin{align}
& \lim_{N\to\infty}\lint{0}{\infty}{\lint{-\infty}{\infty}{((\partial^k_x u^{(N)}(t,x))^\ell - (\partial^k_x u(t,x))^\ell)\psi(t,x)}{x}}{t} = 0. \label{eqn_w_convergence_2}
\end{align}
\end{corollary}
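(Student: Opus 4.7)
For \eqref{eqn_w_convergence_1}, I would simply observe that $\psi(0,\cdot) \in C^\infty_c(\R) \subseteq C_0(\R)$, so the conclusion follows immediately from the standing hypothesis $m^{(N)}(0) \stackrel{w^*}{\to} m(0)$, which is precisely convergence when tested against elements of $C_0(\R)$.

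For \eqref{eqn_w_convergence_2}, the plan is to reduce the $\ell$-th power difference to a first-order difference and then invoke \Cref{prop_convergence_L1loc} on a single bounded rectangle. First, since $\psi \in C^\infty_c(\Omega)$, I would choose $R > 0$ large enough that $\mathrm{supp}(\psi) \subseteq \Omega_R = [0,R] \times [-R,R]$; this truncates the outer double integral to $\Omega_R$ at the cost of a factor of $\pn{\psi}{\infty}$. Next, I would apply the telescoping identity
\[
a^\ell - b^\ell = (a-b)\sum_{j=0}^{\ell-1} a^j b^{\ell-1-j},
\]
together with the uniform pointwise bound $M := \max_{0 \leqslant j \leqslant n}\pn{G^{(j)}}{\infty}$ on both $\partial^k_x u^{(N)}$ and $\partial^k_x u$ guaranteed by \eqref{eqn_L1loc_7}, to obtain
\[
\big|(\partial^k_x u^{(N)}(t,x))^\ell - (\partial^k_x u(t,x))^\ell\big| \leqslant \ell M^{\ell-1}\big|\partial^k_x u^{(N)}(t,x) - \partial^k_x u(t,x)\big|
\]
almost everywhere on $\Omega$.

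Combining these ingredients, the modulus of the left-hand side of \eqref{eqn_w_convergence_2} is dominated by
\[
\ell M^{\ell-1}\pn{\psi}{\infty}\pn{\partial^k_x u^{(N)}\restrict{R} - \,\partial^k_x u\restrict{R}}{1},
\]
which tends to zero as $N \to \infty$ by \Cref{prop_convergence_L1loc}. The only mild point to verify is that the $L^\infty$ bound \eqref{eqn_L1loc_7} applies uniformly in $N$ to both sequences, but this is immediate since it depends only on $G$ and on the fact that $m^{(N)}(t), m(t) \in B_{\M(\R)}$ for all $t$. I do not anticipate any substantive obstacle: both essential ingredients -- the uniform $L^\infty$ bound and the $L^1_{\mathrm{loc}}$ convergence -- have already been established in the preceding results, and all that remains is the bookkeeping provided by the telescoping identity.
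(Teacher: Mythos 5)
Your proposal is correct and follows essentially the same route as the paper: part \eqref{eqn_w_convergence_1} via $w^*$-convergence tested against $\psi(0,\cdot) \in C_0(\R)$, and part \eqref{eqn_w_convergence_2} by combining the uniform bound \eqref{eqn_L1loc_7} with \Cref{prop_convergence_L1loc} and the compact support of $\psi$. The telescoping identity you write out is exactly the (implicit) step the paper uses when it passes from convergence of $\partial^k_x u^{(N)}$ to convergence of its $\ell$-th powers in $L^1(\Omega_R)$; you have merely made it explicit.
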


\begin{proof}
The first limit \eqref{eqn_w_convergence_1} follows directly from the fact that $m^{(N)}(0) \stackrel{w^*}{\to} m(0)$ and the function $x \mapsto \psi(0,x)$ belongs to $C_0(\R)$. To see that \eqref{eqn_w_convergence_2} holds, we appeal again to \eqref{eqn_L1loc_7}, thus giving
\[
\lim_{N\to\infty}\pn{(\partial^k_x u^{(N)})^\ell\restrict{R} - \,(\partial^k_x u)^\ell\restrict{R}}{1} = 0.
\]
for $R>0$, $k \leqslant n$ and $\ell \in \N$, by \Cref{prop_convergence_L1loc}. Since $\psi$ is bounded and has bounded support in $\Omega$, \eqref{eqn_w_convergence_2} follows by picking $R>0$ large enough.
\end{proof}

We are finally in a position to prove \Cref{thm_CH_solution}.

\begin{theorem}\label{thm_CH_solution}
Let $G$ be as in \Cref{ex_CH_Green} and let $(m^{(N)})\subseteq \mathcal{X}$ be the sequence from \Cref{prop_unif_Lipschitz}. Then there exists a subsequence of $(m^{(N)})$, labelled in the same way, and $m \in \mathcal{X}$, such that
\[
m^{(N)}(t) \stackrel{w^*}{\to} m(t) \qquad\text{for all }t \in \R^+.
\]
Moreover, if we define $u$ on $\R^+$ by $u(t)=T^*m(t) = G * (m(t))$, then $u$ is a solution of \eqref{eqn_weak_CH} with initial conditions \eqref{eqn_init_condition}, and $u \in C^{0,\frac{1}{2}}_b(\R^+;H^1(\R))$.
\end{theorem}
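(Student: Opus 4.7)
The plan is to assemble the pieces established in the previous sections. My first step is to invoke \Cref{prop_unif_Lipschitz} together with \Cref{prop_exist}: the sequence $(m^{(N)})$ is uniformly $d$-Lipschitz (hence $d$-equicontinuous) with each $m^{(N)}(t)\in B_{\M^+(\R)}$, so I extract a subsequence converging pointwise in the $w^*$-topology to some $m\in\mathcal{X}$, the Lipschitz constant surviving in the limit by the $w^*$-lower semicontinuity of $d$. Since the Green's function of \Cref{ex_CH_Green} has $n=1$, applying \Cref{prop_reg_solutions} to this $d$-Lipschitz $m$ immediately yields $u\in C^{0,\frac{1}{2}}_b(\R^+;H^1(\R))$.

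The substantive step is verifying that $u$ satisfies \eqref{eqn_weak_CH}. Each $u^{(N)}$ already satisfies \eqref{eqn_weak_CH} with initial datum $m^{(N)}(0)$ (this is precisely what the ODE system \eqref{eqn_ODEs} was designed to ensure, as explained after \eqref{eq:mNdef}), so I only need to pass to the limit in $N$. I would apply \Cref{cor_test_convergence} term by term: the boundary term is handled by \eqref{eqn_w_convergence_1}; the three space-time integrals are handled by \eqref{eqn_w_convergence_2} with, respectively, $(k,\ell)=(0,1)$ for the term linear in $u$ (test factor $\psi=\phi_t-\alpha^2\phi_{txx}$), $(k,\ell)=(0,2)$ for the $u^2$ term (test factor $\psi=\frac{3}{2}\phi_x-\frac{1}{2}\alpha^2\phi_{xxx}$), and $(k,\ell)=(1,2)$ for the $u_x^2$ term (test factor $\psi=\frac{1}{2}\alpha^2\phi_x$). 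In each case $\psi$ is a derivative of $\phi\in C^\infty_c(\Omega)$ and hence itself lies in $C^\infty_c(\Omega)$, meeting the hypothesis of \Cref{cor_test_convergence}.

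It then remains only to identify the initial data. By the uniqueness of $w^*$-limits, the standing assumption \eqref{eqn_approx_init} that $m^{(N)}(0)\stackrel{w^*}{\to}m_0$, combined with the conclusion $m^{(N)}(0)\stackrel{w^*}{\to}m(0)$ coming from \Cref{prop_exist}, forces $m(0)=m_0$; therefore $u(0)=G*m(0)=G*m_0=u_0$, as required. I do not anticipate a serious obstacle here: essentially all the analytic content has been absorbed into \Cref{prop_unif_Lipschitz,prop_exist,prop_reg_solutions} and \Cref{cor_test_convergence}, and the main labour is the clerical task of matching each term of \eqref{eqn_weak_CH} with the correct $(k,\ell)$ instance of \eqref{eqn_w_convergence_2}. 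If anything merits a sanity check, it is the fourth term, which involves $u_x^2$ and where $u_x$ exists only almost everywhere; but this is harmless because $(u^{(N)})_x=G'*m^{(N)}$ a.e.\ by \eqref{eqn_conv_deriv}, with uniform essential bound given by \eqref{eqn_L1loc_7}, so the $(k,\ell)=(1,2)$ instance of \eqref{eqn_w_convergence_2} applies without modification.
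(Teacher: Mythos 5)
Your proposal is correct and follows essentially the same route as the paper's own proof: extract the subsequence via \Cref{prop_unif_Lipschitz} and \Cref{prop_exist}, identify $m(0)=m_0$ by uniqueness of $w^*$-limits, pass to the limit in \eqref{eqn_weak_CH} via \Cref{cor_test_convergence}, and obtain the regularity from \Cref{prop_reg_solutions}. Your explicit term-by-term matching of the $(k,\ell)$ instances of \eqref{eqn_w_convergence_2} (and the remark about $u_x$ existing only a.e.) is detail the paper leaves implicit, but it is the same argument.
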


\begin{proof}[Proof of \Cref{thm_CH_solution}]
Again, we use the identification $u^{(N)}(t,x)=u^{(N)}(t)(x)$. It is shown in \cite{clp:12}*{Proposition 3.4} that the functions $u^{(N)}$ defined by $u^{(N)}(t)=T^*m^{(N)}(t)$, $t \in \R^+$, $N \in \N$, satisfy \eqref{eqn_weak_CH} with initial data $m^{(N)}(0)$. 

According to \Cref{prop_exist}, there exists a subsequence of $(m^{(N)})$, which we will label in the same way, and $m \in \mathcal{X}$, such that
\[
m^{(N)}(t) \stackrel{w^*}{\to} m(t) \qquad\text{for all }t \in \R^+.
\]
Moreover, by \Cref{prop_unif_Lipschitz} the $m^{(N)}$, $N \in \N$, are $d$-Lipschitz with $\sup_N \Lip(m^{(N)})$ finite. Thus $m$ is also $d$-Lipschitz.

Recalling \eqref{eqn_approx_init}, we have $m(0)=m_0$, because $(m^{(N)}(0))$ converges to both $m_0$ and $m(0)$ in the $w^*$-topology. Define $\map{u}{\R^+}{H^1(\R)}$ by $u(t)=T^*m(t)$. We have $u(0)=T^*m(0)=T^*m_0 = u_0$. Hence $u$ satisfies \eqref{eqn_init_condition}. That $u$ also satisfies \eqref{eqn_weak_CH} follows by \Cref{cor_test_convergence} and the fact that the $u^{(N)}$ also satisfy \eqref{eqn_weak_CH}.

Finally, as $m$ is $d$-Lipschitz, \Cref{prop_reg_solutions} tells us that $u \in C^{0,\frac{1}{2}}_b(\R^+;H^1(\R))$.
\end{proof}

We finish the article with a remark on the convergence of the sequence $(m^{(N)})$. We have demonstrated the existence of solutions of \ref{eqn_CH} by taking a subsequence of a sequence $(m^{(N)}) \subseteq \mathcal{X}$ (that correspond to particle solutions), that converges to a limit $m \in \mathcal{X}$. We know that, given initial Radon data $m_0$, the solution of \ref{eqn_CH} is unique \cite{cm:00}. We can use this fact to show that given the initial data $m_0$, the entire sequence $(m^{(N)})$ converges to $m$ in the following sense.

\begin{proposition}
Let $(m^{(N)}) \subseteq \mathcal{X}$ denote the entire sequence of particle solutions as described above, with $G$ as in \Cref{ex_CH_Green}. Then $m^{(N)}(t) \stackrel{w^*}{\to} m(t)$ for all $t \in \R^+$.
\end{proposition}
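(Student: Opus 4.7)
The plan is to argue via the standard subsequence characterization of convergence. Since $C_0(\R)$ is separable, the $w^*$-topology on the bounded set $B_{\mathcal{M}(\R)}$ is metrizable, so $w^*$-convergence of a sequence in $B_{\mathcal{M}^+(\R)}$ is equivalent to the property that every subsequence admits a further subsequence $w^*$-converging to the putative target. Fixing $t \in \R^+$, I would therefore show that every subsequence of $(m^{(N)})$ has a further subsequence whose value at $t$ $w^*$-converges to $m(t)$.

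Given any subsequence $(m^{(N_k)})$, \Cref{prop_unif_Lipschitz} guarantees that the family is equi-$d$-Lipschitz with the same uniform constant valid for the original sequence. \Cref{prop_exist} then yields a further subsequence $(m^{(N_{k_j})})$ and some $\tilde{m} \in \mathcal{X}$ (also $d$-Lipschitz) such that $m^{(N_{k_j})}(t') \stackrel{w^*}{\to} \tilde{m}(t')$ for every $t' \in \R^+$. In particular $\tilde{m}(0)=m_0$ thanks to \eqref{eqn_approx_init}, since this initial approximation property is inherited by every subsequence.

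The argument in the proof of \Cref{thm_CH_solution}, applied verbatim to $(m^{(N_{k_j})})$ in place of the original sequence, shows that $\tilde{u}(t') := T^*\tilde{m}(t') = G * \tilde{m}(t')$ is a weak solution of \ref{eqn_CH} subject to the initial condition \eqref{eqn_init_condition}, and belongs to $C^{0,\frac{1}{2}}_b(\R^+;H^1(\R))$. The uniqueness result of Constantin and Molinet \cite{cm:00} forces $\tilde{u} = u$ on $\Omega$. Applying the modified Helmholtz operator $1-\alpha^2\partial_{xx}$ distributionally at each fixed $t'$, and using \eqref{eq:mt}, yields $\tilde{m}(t') = m(t')$ as elements of $\M(\R)$. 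In particular $m^{(N_{k_j})}(t) \stackrel{w^*}{\to} m(t)$, which completes the subsequence argument and hence the proof.

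The only step requiring a little care is the final passage from equality of $\tilde u$ and $u$ to equality of $\tilde m$ and $m$, but this is routine since both measures are by construction obtained by applying $1-\alpha^2\partial_{xx}$ distributionally to the corresponding functions; everything else is an assembly of already-established machinery from \Cref{sect_lsc,sect_regular,sect_exist} combined with the uniqueness result from the literature.
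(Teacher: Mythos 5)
Your proposal is correct and is essentially the paper's own argument: the paper phrases it as a proof by contradiction (a subsequence staying outside a $w^*$-neighbourhood of $m(t_0)$, then \Cref{prop_exist}, the weak-solution property of the limit, and uniqueness from \cite{cm:00}), which is logically the same as your direct subsequence-of-a-subsequence formulation. The only cosmetic difference is in showing $\tilde m = m$ from $\tilde u = u$: you apply $1-\alpha^2\partial_{xx}$ distributionally, while the paper establishes injectivity of $\nu \mapsto G*\nu$ via the Fourier transform --- two phrasings of the same fact.
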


\begin{proof}
We assume otherwise and obtain a contradiction. Suppose that there exists $t_0 \in \R^+$ such that 
\[
 m^{(N)}(t_0) \not\stackrel{w^*}{\to} m(t_0).
\]
Then there exists a $w^*$-open subset $U \ni m(t_0)$ of $B_{\M^+(\R)}$ and a subsequence $(m^{(N_k)})$ of $(m^{(N)})$, such that $m^{(N_k)}(t_0) \not\in U$ for all $k \in \N$. We can apply \Cref{prop_exist} to obtain a further subsequence $(m^{(N_{k_i})})$ and $\tilde{m} \in \mathcal{X}$ such that
\[
 m^{N_{k_i}}(t) \stackrel{w^*}{\to} \tilde{m}(t) \qquad\text{for all }t \in \R^+.
\]
Just as above, $\tilde{m}$ will also yield a weak solution of \ref{eqn_CH} having the same initial conditions. However, we have $\tilde{m}(t_0) \notin U$ and in particular $\tilde{m}(t_0) \neq m(t_0)$. (Indeed, as $d$ is $w^*$-lower semicontinuous and thus finer than the $w^*$-topology, and $m$ and $\tilde{m}$ are both $d$-continuous, it follows that $\tilde{m}(t) \neq m(t)$ for all $t$ in an open interval containing $t_0$.) 

Now observe that if $\nu \in \mathcal{M(\R)}$ satisfies $G * \nu=0$, then $\nu=0$. Indeed, taking Fourier transforms yields $0=\hat{G}(s)\hat{\nu}(s)$ for all $s \in \R$, and we can calculate that $\hat{G}(s) = (1+\alpha^2s^2)^{-1} \neq 0$ for all $s \in \R$, whence $\hat{\nu}$ is identically zero, giving $\nu=0$. Therefore, as $m(t_0) \neq \tilde{m}(t_0)$, so $G * (m(t_0)) \neq G * (\tilde{m}(t_0))$. This is a contradiction because it implies two distinct solutions of \ref{eqn_CH}. 
\end{proof}

\subsection*{Acknowledgements}

This publication has emanated from research conducted with the financial support of Science Foundation Ireland under Grant number 18/CRT/6049.
 
\bibliography{ch_bibliography}

\end{document}